\newtheorem{theorem}{Theorem}[section]
\newtheorem{lemma}{Lemma}[section]
\newtheorem{corollary}{Corollary}[section]
\newtheorem{definition}{Definition}[section]
\newtheorem{example}{Example\normalfont}[section]
\let\oldexample\example
\renewcommand{\example}{\oldexample\normalfont}
\title{The Combinatorics of Tandem Duplication}
\author[1]{Penso-Dolfin L}
\author[1,2]{Greenman CD*}
\affil[1]{School of Computing Sciences, University of East Anglia, Norwich, UK, NR4 7TJ.}
\affil[2]{The Genome Analysis Center, Norwich Research Park, Norwich, NR4 7UH.}
\date{} % Activate to display a given date or no date (if empty),
\begin{document}
\maketitle

\allowdisplaybreaks

\begin{abstract}
Tandem duplication is an evolutionary process whereby a segment of DNA is replicated and proximally inserted. The different configurations that can arise from this process give rise to some interesting combinatorial questions. Firstly, we introduce an algebraic formalism to represent this process as a word producing automaton. The number of words arising from $n$ tandem duplications can then be recursively derived. Secondly, each single word accounts for multiple evolutions. With the aid of a bi-coloured 2d-tree, a Hasse diagram corresponding to a partially ordered set is constructed, from which we can count the number of evolutions corresponding to a given word. Thirdly, we implement some subtree prune and graft operations on this structure to show that the total number of possible evolutions arising from $n$ tandem duplications is $\prod\limits_{k=1}^n(4^k-(2k+1))$. The space of structures arising from tandem duplication thus grows at a super-exponential rate with leading order term $\mathcal{O}(4^{\frac{1}{2}n^2})$.
\end{abstract}

\let\thefootnote\relax\footnote{*Corresponding Author}

%%%%%%%%%%%%%%%%%%%%%%%%%%%%%%%%%%%%%%%%%%%%%%%%%%%%%%%%%%%%%%%%%%%%%%%%%%

\section{Introduction}

Tandem Duplications (TD) occur when a region of DNA is duplicated and inserted adjacent to the original segment. This can be seen in Figure \ref{TD_Example}A where we start with five contiguous regions, labeled $ABCDE$. This is the original configuration and is termed the \emph{reference}. We then have a tandem duplication of $BCD$ to give sequence $ABCDBCDE$. We then have a duplication of region $DB$ to finally give $ABCDBDBCDE$. This is one process that has long been known to be implicated in the formation of gene clusters \cite{Ohno}, \cite{Nye} and more recently has been implicated in the formation of amplicons in cancer \cite{McBride}, \cite{Raphael1}, \cite{Raphael2}, \cite{Zhang}. In both cases Darwinian selection may be acting to increase the number of copies of a target gene.

Analyzing tandem duplication leads to some interesting combinatorics, some questions of which have been considered elsewhere \cite{Gascuel}, \cite{Yang}, \cite{Bertrand}. In particular, \cite{Gascuel} and \cite{Yang} count the number of TD trees consistent with sequences arising from a TD process. These methods generally assume, firstly, that breakpoints can be re-used, and secondly, that the full sequences (such as $ABCDBDBCDE$ above) are available. Neither of these two assumptions necessarily applies to all situations.

Firstly, a breakpoint in this context can mean the gap between two contiguous loci, such as a pair of genes in a gene cluster, which can cover a wide region and be implicated in more than one duplication event with reasonable probability, or it can mean the precise end points of the duplicated region, which are less likely to be implicated on more than one occasion. Modern sequencing (paired-end) data can resolve breakpoints to the basepair level and reveal tandem duplications to great precision, such as with cancer data \cite{McBride}. In such cases, when a tandem duplication occurs, two breakpoints are implicated in a presumably random process. The chance that precisely the same positions are subsequently implicated in another TD is likely to be small and assuming unique breakpoint use is reasonable in these circumstances. The questions considered in this work are restricted to the case of unique breakpoint use.

Secondly, a full TD sequence contains more information than we may be accustomed to. Typically, we know the reference sequence, the number of copies of each region, and pairwise connectivity. For example, in Figure \ref{TD_Example}B, instead of the sequence $ABCDBDBCDE$, we see that we have $[1,3,2,3,1]$ copies of the five originating regions. We refer to this as a \emph{CNV} (copy number vector). We also see that we have two types of \emph{somatic connection} that do not exist in the originating reference $ABCDE$; one connecting the end of $D$ to the beginning of $B$, the other connecting the end of $B$ to the beginning of $D$. We can represent this as a \emph{TD-Graph}, such as in Figure \ref{TD_Example}Ciii, where each node represents a reference region, the numbers at each node represent the number of copies, and the curved edges represent the somatic connections. This is representative of the information that is typically available from some sequencing experiments \cite{Greenman2} and represents the data at the end of the TD process. However, the genome at the start of the process will be represented by the simple TD-Graph in Figure \ref{TD_Example}Ci which will change every time we have a TD. We thus have a sequence of TD-Graphs, such as in Figures \ref{TD_Example}Ci,ii,iii, arising from a TD process. We refer to this as a \emph{TD-Evolution}.

\begin{figure}[t!]
\centering
\includegraphics[width=160mm]{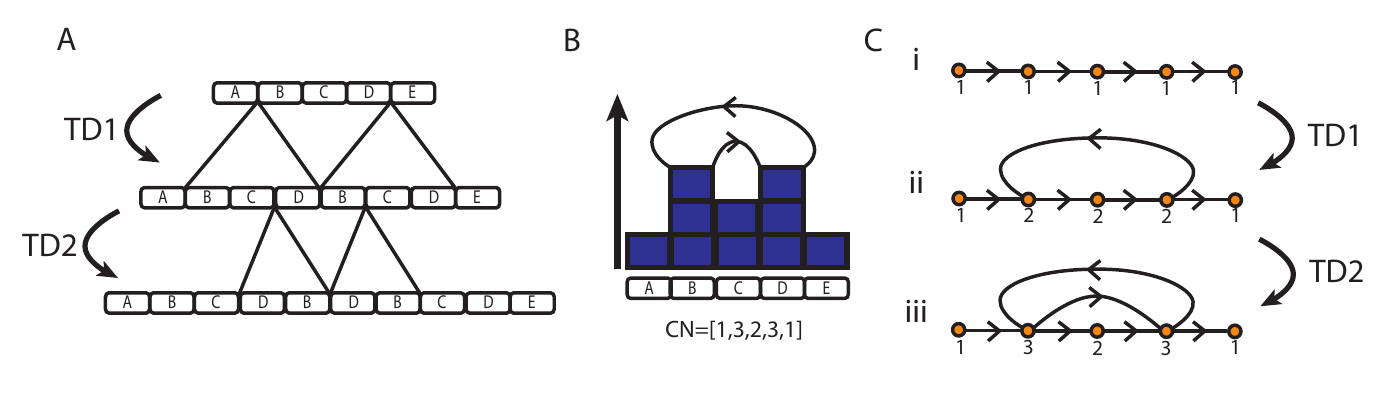}
\caption{A Tandem Duplication Process. A) Three TD sequences arising from two TDs on a reference of five regions; $ABCDE$. B) The \emph{copy number vector}, counting the number of copies of each reference region. C) The corresponding \emph{TD-Evolution} containing three \emph{TD-Graphs}; nodes represent reference regions, numbers at nodes count the number of copies of each region, and edges indicate connections between segments.}
\label{TD_Example}
\end{figure}

The problem we consider is concisely stated; count the number of different TD-Evolutions that arise from $n$ TDs. This is acheived as follows. Firstly, we consider how to best represent the process. We will see that by labeling each somatic connection with a number we can turn the process into an automaton acting upon words consisting of positive integers; any structure produced by $n$ TDs can then be represented by a word on symbols $1,2,...,n$. We then explore the size of this space of words. Each word will be seen to correspond to many different TD-Evolutions. Thus, secondly, we consider how to count the distinct TD-evolutions that all correspond to a single word. This involves the construction of a suitable partially ordered set (poset). Thirdly, we combine these two pieces of information and provide an explicit count of the number of possible evolutions for a given number of tandem duplications. Concluding remarks complete the paper.

%%%%%%%%%%%%%%%%%%%%%%%%%%%%%%%%%%%%%%%%%%%%%%%%%%%%%%%%%%%%%%%%%%%%%%%%%%

\section{Representation}

We now introduce representations of the TD process, utilizing three different forms. First we have a visual \emph{zig-zag} representation, which is used to describe the final structure relative to the originating reference structure. Secondly we have an algebraic \emph{word evolution} representation, which enables the process to be viewed as an automaton on words composed of integers. Finally we have a \emph{$2$d-tree} representation. Now $2$d trees generalize the notion of trees. Trees can be characterized as connected graphs such that each node has a single parental node, apart from a single root node. We can define an $n$d tree to be a graph such that all nodes (except root nodes) have $n$ parental nodes. This kind of graph has been applied to data forms arising from search algorithms \cite{Bentley}, \cite{Turtle} and have seen other applications in genetics as recombination graphs \cite{Griffiths}, and pedigree graphs \cite{Kirkpatrick}, for example.

\begin{figure}[t!]
\centering
\includegraphics[width=160mm]{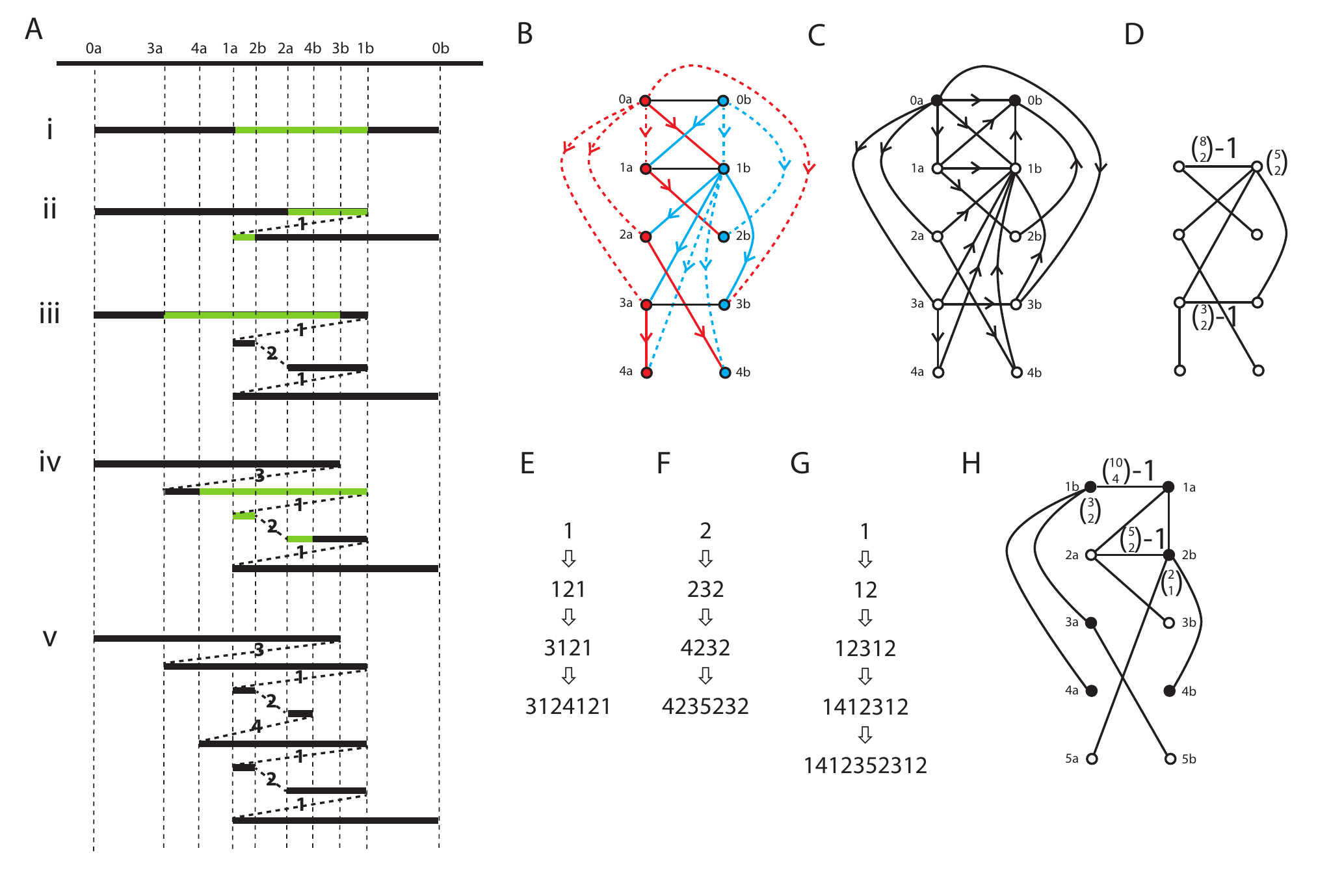}
\caption{Representation of the TD Process. In A) we have \emph{zig-zag} plots for a sequence of four TDs, resulting in five structures i)-v). The green regions indicates the region duplicated during each TD. Dashed lines indicate a connection between segments. Coordinates $n_a$ and $n_b$ indicate the end positions of the $n^{th}$ duplicated region. B) Corresponding 2d-tree. Nodes correspond to breakpoints and edges demarcate an ordering. Red and blue colours indicate lower and upper bound breakpoints. Dashed and plain edges indicate minor and major edges. C) Corresponding Hasse diagram. D) The major graph corresponding to evolution E. F) Increases each symbol of E by 1. G) An induced evolution from F. H) The major graph corresponding to induced evolution G. The black nodes indicate the corresponding $1$-nodeset.}
\label{TD_Process}
\end{figure}

We introduce the requisite structure with the example in Figure \ref{TD_Process}. We start with a single segment; an interval $[0_a,0_b]$, which is represented as a single horizontal line in Figure \ref{TD_Process}Ai. All coordinates described below are positioned relative to this interval, as demarcated by the positions at the top of Figure \ref{TD_Process}A. The node $0_a$ is assigned a type, $a$ (coloured red), indicating it is the left end of a segment. Node $0_b$ is assigned a type, $b$, (coloured blue) indicating a right end of a segment. These labels are associated with the top two nodes of the $2$d-tree in Figure \ref{TD_Process}B. These are bridged by an edge which will represent their ordering in the reference; $0_a < 0_b$.

Next we have the first TD event. This involves the duplication of a specific single region (coloured green in Figure \ref{TD_Process}Ai), and so implicates two positions; the left and right ends of the duplicated region, with reference coordinates $1_a$ and $1_b$, respectively. Now end $1_b$ is connected to $1_a$ in the duplication process, represented by the dashed line in the zig-zag diagram of Figure \ref{TD_Process}Aii. This is the first \emph{somatic connection}, labeled with numerical symbol $1$. Thus we have our first word of Figure \ref{TD_Process}E; $1$. Now the two positions $1_a$ and $1_b$ are both bound between the coordinates of $0_a$ and $0_b$. In the 2d-tree representation, we have two nodes representing coordinates $1_a$ and $1_b$. These nodes both have edges connected to two parental nodes $0_a$ and $0_b$. We have edges of type $a$ (red) from node $0_a$ to $1_a$ and $1_b$ representing the fact that $0_a$ is a lower bound of $1_a$ and $1_b$. Similarly we have edges of type $b$ (blue) from $0_b$ to $1_a$ and $1_b$, representing the fact that $0_b$ is an upper bound of $1_a$ and $1_b$. The black edge is a third class of edge, termed a \emph{fence}, and connects $1_a$ to $1_b$, representing the restriction $1_a < 1_b$. 

Our second TD then duplicates the green portion in Figure \ref{TD_Process}Aii, which includes the first somatic connection, forming two breakpoints $2_a$ and $2_b$. Position $2_a$ is on the upper segment $[0_a,1_b]$ of Figure \ref{TD_Process}Aii and so must lie between positions $0_a$ and $1_b$. These are its two parental nodes. The blue edge from node $1_b$ to $2_a$ indicates $1_b$ is an upper bound of $2_a$. The red edge from $0_a$ to $2_a$ indicates $0_a$ is a lower bound of $2_a$. The status of \emph{major} (solid) and \emph{minor} (dashed) is assigned to each pair of parental edges to a node, where major and minor refer to the parental nodes with higher and lower TD numbers, respectively. For example, $2_a$ has parents $1_b$ and $0_a$, the TD numbers satisfy $1>0$, so the edge from $1_b$ is the major, and that from $0_a$ is the minor. This distinction will later be important. This results in one new somatic connection, labeled $2$, and a duplication of the original connection $1$. Reading the somatic connections through the structure in Figure \ref{TD_Process}Aiii then produces the second word $121$ of Figure \ref{TD_Process}E.

We then procede through the TDs building up the 2d-tree. In general we have the following:
\\
\newline
\emph{\bf 2d-Tree Construction}
\\

Initialize with segment $[0_a,0_b]$. Let $n_a$ and $n_b$ represent the reference positions of the start and end of the $n^{th}$ duplicated region, where $n$ is the \emph{TD-number}. Node $n_a$ is designated type $a$ (coloured red), and $n_b$ is designated type $b$ (coloured blue). If $n_a$ (resp. $n_b$) lies on the segment $[u_a,v_b]$ we have a type $a$ (red) edge from $u_a$ to $n_a$ (resp. $n_b$), and a type $b$ (blue) edge from $v_b$ to $n_a$ (resp. $n_b$). If $u>v$, the edge from $u_a$ is major (solid), the edge from $v_b$ is minor (dashed). This is reversed if $u<v$. If $n_a$ and $n_b$ are formed on the same segment, no somatic connections are duplicated, and we must have $n_a<n_b$ which we represent with a fence edge (black) between nodes $n_a$ and $n_b$.
\\

Note that the choice of major and minor is ambiguous for the first TD. Both $1_a$ and $1_b$ are placed on the same interval $[0_a,0_b]$ so have parental nodes $0_a$ and $0_b$ that have equal TD-number $0$. It will prove consistent to define them as follows; $1_a$ has major (resp. minor) parental nodes $0_b$ (resp. $0_a$), $1_b$ has major (resp. minor) parental nodes $0_a$ (resp. $0_b$). Note that in all other cases either a type $a$ node $n_a$ is placed on $[u_a,v_b]$, where $n>\{u,v\}$ resulting in new interval $[n_a,v_b]$, with $n \ne v$, or a type $b$ node $n_b$ is placed on $[u_a,v_b]$, where $n>\{u,v\}$ resulting in new interval $[u_a,n_b]$, with $n \ne u$. Thus apart from the initial interval, the TD numbers of the endpoints of any interval are distinct and the major/minor is well defined.

We note that the relative order of positions $n_a$ and $n_b$ distinguishes two types of somatic connection. The somatic connection of the $n^{th}$ TD is \emph{reversed} when $n_a<n_b$ and otherwise it is \emph{forward}. Note, for example, that the first TD in Figure \ref{TD_Example}Cii is reversed ($1_a<1_b$) with a backward direction on the corresponding edge of the TD-Graph. This is the type of connection usually associated with TDs. However, the second TD is forward ($2_a>2_b$) with a forward direction on the corresponding (lower curved) edge of the TD-Graph in Figure \ref{TD_Example}Ciii . This is the type of connection usually associated with dna deletions, but can occur from multiple TDs.

Finally we comment that the presence of fences implies structures such as Figure \ref{TD_Process}B take a more general form than a 2d-tree. For convenience we use the phrase \emph{2d-tree} with that understanding in mind. 

%%%%%%%%%%%%%%%%%%%%%%%%%%%%%%%%%%%%%%%%%%%%%%%%%%%%%%%%%%%%%%%%%%%%%%%%%%

\section{Word Representations}

We can describe the evolution in the example of Figure \ref{TD_Process}A in terms of words (Figure \ref{TD_Process}E); $\underline{1} \rightarrow 121 \rightarrow 3\underline{12}1 \rightarrow 3124121$ (duplicated subwords are underlined). Here, the second TD duplicates the first somatic connection, the third TD duplicates no somatic connections, and the fourth duplicates the sub-word of somatic connections $12$. This evolution of words is an example of an automaton \cite{Automata}:
\\
\newline
\emph{\bf TD Word Automaton}
\\
Initialize with word $W_1=1$. Then the word formed from the $n^{th}$ TD is obtained recursively as:

\begin{center}
$W_n=W_{n-1}(1:a-1)\cdot W_{n-1}(a:b)\cdot n\cdot W_{n-1}(a:b)\cdot W_{n-1}(b+1:N_{n-1})$ 
\end{center}

Here $N_k$ is the length of word $W_k$, $W_k(u:v)$ is the sub-word formed from the $u^{th}$ letter to the $v^{th}$ letter (inclusive), and $b \ge a-1$. If $b = a-1$ then $W_{n-1}(a:b)$ is empty. If $a=1$ then $W_{n-1}(1:a-1)$ is empty. If $b=N_{n-1}$ then $W_{n-1}(b+1:N_{n-1})$ is empty.
\\

We refer to the sequence $W_1 \rightarrow W_2 \rightarrow ... \rightarrow W_n$ as a \emph{word evolution} on $n$ TDs. We let $\mathcal{W}_n$ denote the set of all possible word evolutions on $n$ TDs.

We introduced fences for the situation where $n_a$ and $n_b$ form on the same segment. This means that the $n^{th}$ TD does not duplicate any somatic connections. In terms of the word automaton, these correspond to a step where no symbols are duplicated; no symbol is duplicated in the step $121 \rightarrow 3121$ for example.

We next consider how many words can arise from $n$ TDs, that is, the size of the space $\mathcal{W}_n$. For example, from the initial word $1$, a second TD can produce words $12$, $21$ or $121$, and $|\mathcal{W}_2|=3$; two words of length $2$ and one word of length $3$. In general we have the following result.

\begin{theorem}
If $w_{m,n}$ is the number of words of length $m$ arising from $n$ TDs, we have the following recursion,

$w_{m,n}=\sum_{k=\lfloor \frac{m-1}{2} \rfloor}^{m-1}(2k-m+2)w_{k,n-1}$

where we have initial values $w_{i,0}=\{\substack{1,i=0 \\ 0,i \ge 1}$
\label{LengthTheorem}
\end{theorem}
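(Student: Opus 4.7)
The plan is to induct on the number of TDs by conditioning on the final step. Take any word evolution $W_1 \to W_2 \to \cdots \to W_n$ with $|W_n|=m$, let $k = |W_{n-1}|$, and let $(a,b)$ be the pair used in the $n$-th application of the TD automaton. By the recursion defining the automaton, $|W_n| = k + \ell + 1$, where $\ell = b-a+1 \in \{0,1,\ldots,k\}$ is the length of the duplicated sub-word. So, once $m$ and $k$ are fixed, $\ell = m-k-1$ is forced, and this is a legal value precisely when $(m-1)/2 \le k \le m-1$.

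Next I count the choices of $(a,b)$ for fixed $k$ and $\ell = m-k-1$. Such a pair is determined by its starting position $a$, which ranges over $\{1,\ldots,k-\ell+1\}$ when $\ell \ge 1$, and over $\{1,\ldots,k+1\}$ when $\ell = 0$ (an empty sub-word can be inserted in any of the $k+1$ gaps). Both cases give $k-\ell+1 = 2k-m+2$ choices. I then verify that distinct $(a,b)$ produce distinct successors $W_n$: the symbol $n$ is new to the alphabet, so it appears exactly once in $W_n$ and locates the insertion point, while the two flanking repeated copies of $W_{n-1}(a\!:\!b)$ determine $\ell$, and hence $a$ and $b$. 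Consequently each length-$(n-1)$ evolution ending at a length-$k$ word admits exactly $2k-m+2$ distinct extensions to a length-$m$ successor. Summing over $k$ gives $w_{m,n} = \sum_k (2k-m+2)\, w_{k,n-1}$. The base case drops out of the formal initialisation $w_{0,0}=1$, $w_{i,0}=0$ for $i \ge 1$, applied at $n=1$ to reproduce the unique starting word $W_1 = 1$.

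The only delicate point, which I regard as the main obstacle, is the lower limit of the sum. The condition $\ell \le k$ strictly requires $k \ge \lceil (m-1)/2 \rceil$, whereas the theorem writes $\lfloor (m-1)/2 \rfloor$; the two agree for odd $m$, and for even $m$ the extra term $k = m/2 - 1$ carries coefficient $2k-m+2 = 0$ and thus contributes nothing, so the floor version is simply a convenient uniform expression. The upper limit $k = m-1$ corresponds to the $\ell = 0$ case of a TD duplicating no existing symbol. Beyond this boundary bookkeeping and the need to cleanly absorb the empty sub-word insertions into the single formula $2k-m+2$, the argument is a direct enumeration supported by the injectivity observation above.
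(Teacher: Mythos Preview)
Your proof is correct and follows essentially the same route as the paper: condition on the length $k$ of the penultimate word, observe that the duplicated block has forced length $\ell=m-k-1$, and count the $k-\ell+1=2k-m+2$ placements. You are somewhat more careful than the paper in two respects---you explicitly verify injectivity (that the unique occurrence of the fresh symbol $n$ in $W_n$ lets one recover both $W_{n-1}$ and $(a,b)$), and you address the floor-versus-ceiling discrepancy at the lower limit---but neither constitutes a different argument, only a tidier one.
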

\begin{proof}
If we have a word with $k$ symbols then we can duplicate $r \in \{0,1,..,k\}$ of those symbols. Furthermore there are $k-r+1$ sets of $r$ consecutive symbols that we can choose to duplicate. Note that a TD duplication copies $r$ symbols and also introduces one new TD symbol, resulting in a word with $m=k+r+1$ symbols. Then $k=m-r-1$ for $r \in \{0,1,..,k\}$ and any word of length $m$ can derive from a word of length $k \in \{ \lfloor \frac{m-1}{2} \rfloor,...,m-1\}$. Lastly, we note that there are $k-(m-k-1)+1=2k-m+2$ ways to do this.
\end{proof}

\begin{figure}[t!]
\centering
\includegraphics[width=60mm]{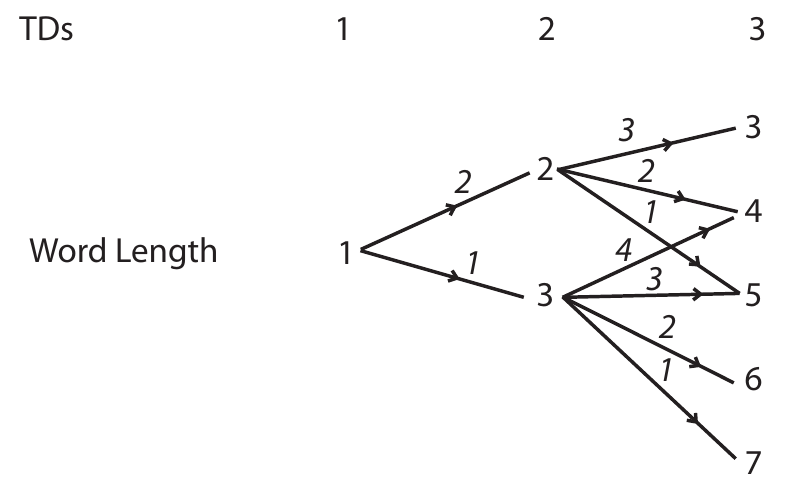}
\caption{Schematic of number of possible TD words. Numbers at nodes indicate the length of TD words. Numbers on edges indicate the number of choices.}
\label{TD_Words}
\end{figure}

\begin{example}
In Figure \ref{TD_Words} we see a graph representation of the possibilities, where values $w_{n,m}$ are equivalently obtained by taking products of the edge values along paths to the associated node, from the node labeled $0$, and summing. For example, the node labeled $5$ in the fourth column of nodes corresponds to $w_{3,5}$ and has two paths, one with product $1\cdot2\cdot1$, the other with $1\cdot1\cdot3$ and we find $w_{3,5}=2+3=5$, five words of length five; $12312$, $21321$, $13121$, $12321$ and $12131$.  
\end{example}

It is natural to attempt to find a general formula for the number of words arising from $n$ TDs by constructing a generating function from this recursion. However, this approach did not prove fruitful suggesting a closed form expression for the word count is not forthcoming.

The counts $|\mathcal{W}_n|=\sum_mw_{m,n}$ of words arising from $n$ TDs can be seen in Table \ref{TableCount}.

\begin{table}[htbp]
\begin{center}
\scalebox{0.9}{
  \begin{tabular}{| c || c | c | c | c | c | c | c| c| c| c |}
    \hline
    \text{TDs}					& 1 & 2 & 3 & 4   & 5       & 6 \\ \hline \hline
    \text{Words} 				& 1 & 3 & 22 & 377 & 15,315   & 1,539,281 \\ \hline
    \text{CNVs} 		& 1 & 7 & 225 & 27,839 & - & - \\ \hline
    \text{TD-Graphs} 		& 1 & 8 & 288 & 37,572 & - & - \\ \hline
    \text{TD-Evolutions} 				& 1 & 11 & 627 & 154,869 & 156,882,297 & 640,550,418,651   \\ \hline
	  \end{tabular}}
\end{center}
\caption{Counts of TDs, Words, CNVs, TD-Graphs and TD-Evolutions.}
\label{TableCount}
\end{table}

%%%%%%%%%%%%%%%%%%%%%%%%%%%%%%%%%%%%%%%%%%%%%%%%%%%%%%%%%%%%%%%%%%%%%%%%%%

\section{Counting Evolutions With Posets}

Although we can count the number of words with relative ease, there maybe several different TD-Evolutions that correspond to a single word. We see from Figure \ref{TD_Evolution_Counts}A, for example, that there are three TD-Evolutions possible in the word evolution $[1 \rightarrow 12]$, where the second somatic connection follows one copy of the first. These cases can be phrased more familiarly in terms of TD sequences. Now two TDs result in four breakpoints $1_a$, $1_b$, $2_a$ and $2_b$ that divide the original interval $[0_a,0_b]$ into five regions $A$, $B$, $C$, $D$ and $E$. There are three choices that use two pairs of breakpoints uniquely such that the second somatic connection follows the first. If we underline the duplication and use `$|_i$' for the $i^{th}$ somatic connection; $A\underline{BCD}E \rightarrow ABCD|_1B\underline{C}DE \rightarrow ABCD|_1BC|_2CDE$, $A\underline{BC}DE \rightarrow ABC|_1B\underline{CD}E \rightarrow ABC|_1BCD|_2CDE$ and $A\underline{B}CDE \rightarrow AB|_1BC\underline{D}E \rightarrow AB|_1BCD|_2DE$. Counting copies of the five regions $A$, $B$, $C$, $D$ and $E$, we see that the three cases give rise to CNVs $[1,2,3,2,1]$, $[1,2,3,2,1]$ and $[1,2,1,2,1]$, respectively. Although the first two are equal, all three can be seen to have distinct TD-Graphs in Figure \ref{TD_Evolution_Counts}A.

These three cases have the following explanation in terms of breakpoint ordering. Once the first duplication has occurred, the two breakpoints $2_a$ and $2_b$ associated with the second TD need to be positioned. Now, the first TD requires $1_a<1_b$ resulting in segments $[0_a,1_b]$ and $[1_a,0_b]$ (see Figure \ref{TD_Process}Aii, for example). To obtain the word $12$ from word $1$ we find that we must not copy the first somatic connection, and both $2_a$ and $2_b$ must lie on the second segment $[1_a,0_b]$, so we have $1_a<2_a<2_b$. We then find that the three cases depend on whether $1b$ is less than, in between, or greater than $2_a$ and $2_b$. The three evolutions in Figure \ref{TD_Evolution_Counts}A i-iii then correspond to the three orders $1_a<2_a<2_b<1_b$, $1_a<2_a<1_b<2_b$ or $1_a<1_b<2_a<2_b$. 

These distinct orders represent possible breakpoint positions, subject to the restrictions $1_a<1_b$ from the first TD, and $1_a<2_a<2_b$ from the second TD. Articulating these restrictions more generally requires the construction of a suitable partially ordered set (\emph{poset}) \cite{Posets}. A poset is a set of elements with some order relationships between the elements. Posets are usually represented by a Hasse diagram. This is a directed graph where nodes represent the poset elements, and a directed edge between two nodes indicates an order relation between the two corresponding elements. Any single ordering of the elements that satisfies such a set of restrictions is known as a \emph{linear extension}. The Hasse diagram for any TD word evolution can be readily constructed from the corresponding 2d-tree as follows.

\begin{lemma} If the direction of the type $b$ (blue) edges are reversed in the 2d-tree, and fences are directed from $n_a$ to $n_b$ whenever they occur, a Hasse diagram with single source node $0_a$ and single sink node $0_b$ is obtained.
\label{HasseCor}
\end{lemma}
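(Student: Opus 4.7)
My plan is to verify three properties: (i) the modified directed graph is acyclic and each edge encodes a genuine strict inequality between reference positions, so that it is the Hasse diagram of a bona fide partial order on the breakpoints; (ii) $0_a$ has no incoming edges and is the unique source; and (iii) $0_b$ has no outgoing edges and is the unique sink.

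For (i), I would interpret each edge directly from the 2d-tree construction. A red edge from $u_a$ to a child $n_a$ or $n_b$ is present exactly because $n_a$ (or $n_b$) has been placed inside segment $[u_a,v_b]$, and therefore it encodes the strict inequality $u_a<n_a$ (resp.\ $u_a<n_b$). After reversal, a blue edge from $n_a$ (resp.\ $n_b$) to $v_b$ encodes $n_a<v_b$ (resp.\ $n_b<v_b$), which is the upper-bound condition that also comes from placing the node on $[u_a,v_b]$. A fence directed from $n_a$ to $n_b$ encodes the defining same-segment constraint $n_a<n_b$. Since all breakpoints sit at distinct real coordinates in $[0_a,0_b]$, a directed cycle would yield a strict chain of inequalities returning to its starting point, which is impossible. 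Hence the graph is a DAG and encodes a partial order, as needed.

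For (ii) and (iii), I would induct on the number $n$ of TDs. The base case is the single edge $0_a\to 0_b$, for which the claim is immediate. For the inductive step, the $n^{\text{th}}$ TD introduces exactly two new nodes $n_a$ and $n_b$. By the construction rule each new node acquires one parental red edge, which remains incoming, and one parental blue edge, which under the reversal becomes outgoing; when a fence is added it contributes an outgoing edge at $n_a$ and an incoming edge at $n_b$. Thus both new nodes have at least one incoming and at least one outgoing edge and cannot become sources or sinks. No incidence is ever removed from a pre-existing node, so the invariant ``every node other than $0_a$ has an incoming edge, and every node other than $0_b$ has an outgoing edge'' persists, establishing uniqueness of the source and sink.

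The main obstacle I anticipate is quite mild: one must be careful at the special first TD, where the major/minor designations between $\{1_a,1_b\}$ and $\{0_a,0_b\}$ are fixed by convention rather than by the general rule. However, the inequalities encoded by the edges depend only on the red/blue typing and not on the major/minor labels, so this ambiguity does not affect the underlying poset and the induction carries through unchanged.
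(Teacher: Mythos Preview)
Your argument is correct and rests on the same idea as the paper's proof: every edge of the modified graph encodes a strict inequality coming directly from the segment $[u_a,v_b]$ on which the new breakpoint is placed (red edges give the lower bound, reversed blue edges the upper bound, fences the extra $n_a<n_b$), and distinctness of breakpoint coordinates rules out directed cycles. The paper's proof stops there and leaves the single-source/single-sink claim implicit; your explicit induction showing that each newly created node necessarily receives one incoming (red) and one outgoing (reversed blue) edge, while $0_a$ and $0_b$ never do, is a clean addition that makes that part of the statement fully explicit rather than tacit.
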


For example, in Figure \ref{TD_Process}B we see 2d-tree corresponding to the word evolution given in Figure \ref{TD_Process}E. In Figure \ref{TD_Process}C we see the same graph except the blue edge directions have been reversed, and the three fences are directed. Note that all fully extended, directed paths lead from $0_a$ to $0_b$. Any linear extension, such as $0_a<3_a<4_a<1_a<2_a<2_b<2_b<4_b<3_b<1_b<0_b$ at the top of Figure \ref{TD_Process}A, is satisifed by this Hasse diagram.

\begin{proof} (\emph{of Lemma \ref{HasseCor}})
When we add any node $x \in \{n_a,n_b\}$ to the 2d-tree, it has two parental nodes $u_a$ and $v_b$. By construction, the node $x$ represents a breakpoint that is placed on the segment $[u_a,v_b]$ with leftmost reference position $u_a$ and rightmost position $v_b$, thus we have the ordering $u_a < x < v_b$ in terms of reference position. Now the type $a$ edge directed from $u_a$ to $x$ represents the ordering $u_a < x$. We then select direction of the edges in the Hasse diagram to represent increasing reference position. Now $x < v_b$, so we require a directed edge from $x$ to $v_b$, which is obtained by reversing the direction of the type $b$ edge in the $2$d-tree from $v_b$ to $x$. Finally we note that if we have a fence, we are adding two position $n_a$ and $n_b$ to the same segment. We then have the additional ordering $n_a < n_b$ which is represented by the addition of a direction from $n_a$ to $n_b$. 
\end{proof}

\begin{figure}[t!]
\centering
\includegraphics[width=160mm]{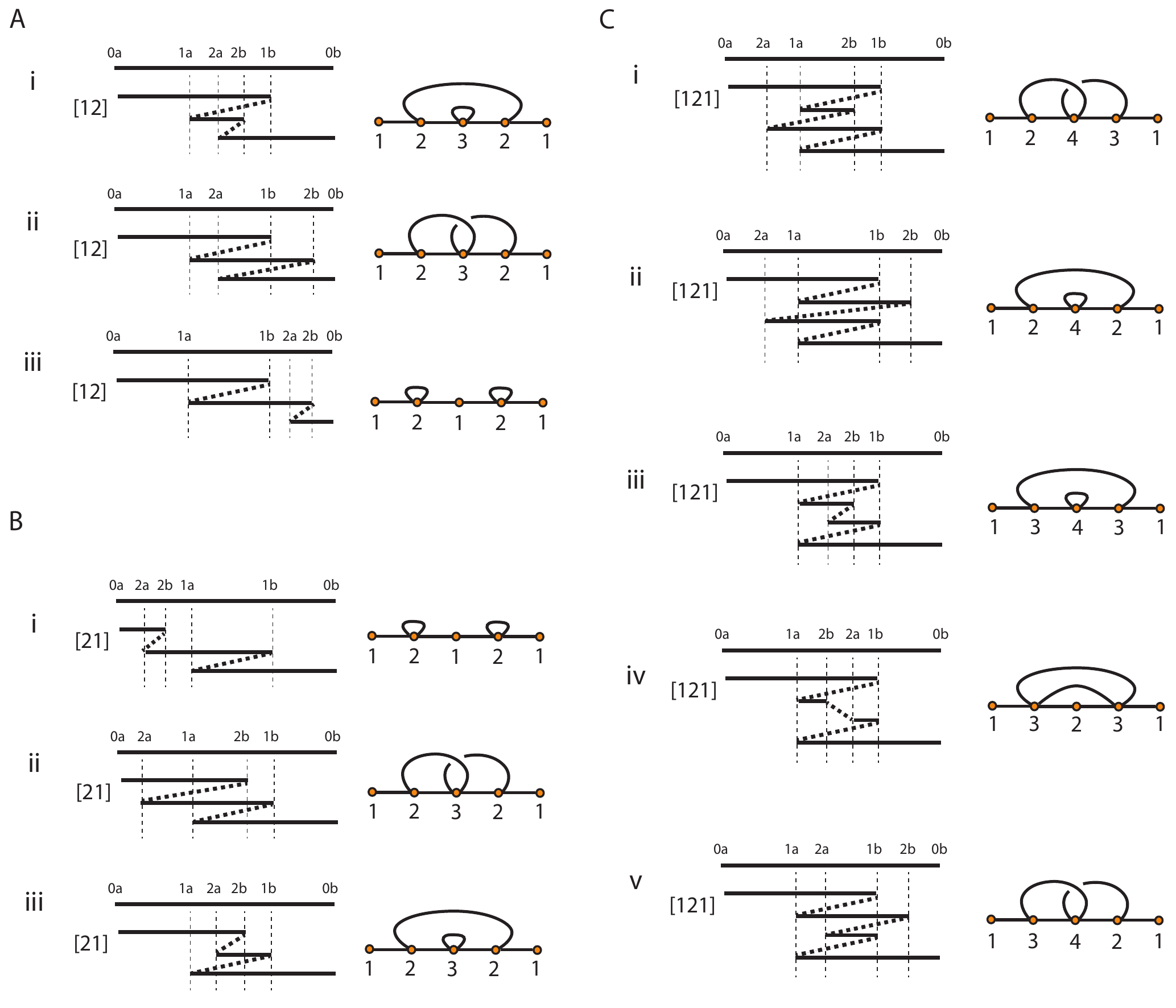}
\caption{Evolutions arising from two TDs. A) Three structures associated with word $12$. B) Three structures associated with word $21$. C) Five structures associated with word $121$. In each instance the left hand image is the zig-zig plot, and the right hand plot the TD-Graph.}
\label{TD_Evolution_Counts}
\end{figure}

Counting the number of different TD-Evolutions associated with a given word then reduces to counting the number of linear extensions associated with a poset. Although finding any single linear extension from a poset can be achieved in polynomial time \cite{Karzanov}, counting the number of linear extensions is known to be \#P-complete \cite{Brightwell} and in general is slow to implement \cite{Posets}. However, for the problem we have, we will show that restricting the Hasse diagram to major edges and fence edges (that is, removing the minor edges) contains all the ordering information. This simplified topology will enable us to obtain a closed form expression for the number of linear extensions. 

For any word evolution $E$, we will refer to the graph obtained from 2d-tree $T(E)$ by selecting just the major and fence edges as the \emph{major graph}, $T_{maj}(E)$.

The next result tells us that this simplified structure contains two trees if we also ignore the fences.

\begin{lemma}
The restriction of the Hasse diagram to the major edges results in two trees rooted to nodes $0_a$ and $0_b$. 
\end{lemma}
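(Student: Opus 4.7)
The plan is to read off three structural facts about the major graph (with fences removed) from the 2d-tree construction and observe that they immediately force a forest of exactly two trees. The three facts are: (i) every node $n_a$ or $n_b$ with $n\ge 1$ has exactly one incoming major edge, while $0_a$ and $0_b$ have none; (ii) along every major edge the TD number of the child strictly exceeds that of the parent; and (iii) both $0_a$ and $0_b$ acquire a major child once at least one TD has taken place.

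Fact (i) is essentially a restatement of the construction: when $n_a$ (and similarly $n_b$) is placed on a segment $[u_a,v_b]$, it receives two parental edges, and exactly one of them is declared major according to which parent carries the larger TD number, with the $n=1$ case fixed by the explicit convention ($1_a$'s major parent is $0_b$ and $1_b$'s major parent is $0_a$). Fact (ii) follows because for $n\ge 2$ the endpoints of $[u_a,v_b]$ satisfy $u,v<n$ and are distinct (as noted just after the construction), so the major parent has TD number $\max(u,v)<n$; for $n=1$ the parents have TD number $0<1$. Fact (iii) is immediate from the convention for the first TD, which attaches $1_b$ to $0_a$ and $1_a$ to $0_b$.

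Now (ii) rules out any directed cycle in the major-edge graph, and together with (i) this means every node lies on a unique finite ascending chain of major parents, which must terminate at a node without a major parent, namely $0_a$ or $0_b$. Hence the restricted graph is a forest whose components are in bijection with the roots $0_a,0_b$, and (iii) ensures neither component is the singleton. The only point requiring care is the base case $n=1$ of the strict-decrease claim, since then the two candidate parents share the same TD number; I would invoke the convention for this step explicitly so that the acyclicity argument proceeds without exception.
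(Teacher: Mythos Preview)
Your proof is correct and follows essentially the same approach as the paper: the key observation in both is that every non-root node carries exactly one major parental edge, so the major-edge graph is a forest with roots $0_a$ and $0_b$. You are simply more explicit than the paper in ruling out cycles via the strict increase of TD numbers along major edges and in handling the $n=1$ convention; the paper's own proof leaves these points implicit.
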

\begin{proof}
In the construction of the poset graph, every node $x \ne \{0_a,0_b\}$ has two parental nodes, labeled $u_a$ and $v_b$, arising from the segment $[u_a,v_b]$ that breakpoint $x$ is formed upon. These two nodes are connected to $x$ by a major and minor parental edge, where $max(u,v)$ and $min(u,v)$ are the major and minor TD numbers, respectively. Thus if we are restricted to the major edges, each node has one parental node, resulting in two trees attached to roots $0_a$ and $0_b$.
\end{proof}

The following result describes how major and minor status relates to the segments of the form $[u_a,v_b]$ involved in the TD process. It will be used to explain why removing the minor edges from the Hasse diagram does not lose any information.

\begin{lemma}
If $[u_a,v_b]$ is any segment arising in the evolution of a TD process then either:

A) Nodes $u_a$ and $v_b$ are connected by a single directed major edge from the node with TD number $min(u,v)$ to node with TD number $max(u,v)$. The positions satisfy the single linear extension $u_a<v_b$.

Or:

B) Nodes $u_a$ and $v_b$ are connected by a minor directed edge from the node with TD number $min(u,v)$ to that with TD number $max(u,v)$. Furthermore there exist nodes with TD numbers in the order $min(u,v)<n_1<n_2<...<n_I<max(u,v)$ that are connected in a chain of major edges in the same order such that: 

i) If $u>v$, all internal nodes are type $a$ (red) and the positions satisfy the single linear extension,

$(n_{1})_a<(n_{2})_a<...<(n_{I})_a<u_a<v_b$,

ii) If $u<v$, all internal nodes are type $b$ (blue) and the positions satisfy the single linear extension,

$u_a<v_b<(n_{I})_b<...<(n_{2})_b<(n_{1})_b$.
\label{CoreLemma}
\end{lemma}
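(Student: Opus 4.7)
The plan is to induct on $\max(u,v)$, the larger TD number among the two segment endpoints. The base case $\max(u,v)=0$ is the initial segment $[0_a,0_b]$, which fits case A with the initial edge playing the role of the single major edge and the trivial position order $0_a<0_b$. For the inductive step I would assume without loss of generality that $u<v$; the case $u>v$ runs identically after swapping the roles of types $a$ and $b$ and of left/right. The key observation is that when $v>u$, the node $v_b$ was introduced by the $v$-th TD, and by the 2d-tree placement rule it must have landed on a pre-existing segment whose left endpoint was already $u_a$; this predecessor segment is therefore of the form $[u_a,w_b]$ for a uniquely determined $w<v$. At that placement, $v_b$ acquired a red parental edge from $u_a$ and a blue parental edge from $w_b$, and its reference position satisfies $u_a<v_b<w_b$.

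I would then case-split on the sign of $u-w$. If $u>w$, the red edge $u_a\to v_b$ is major (since $u_a$ is the parent of larger TD number) and the two endpoints of the current segment are joined by a single major edge, with position order $u_a<v_b$ inherited from the placement; this is case A. If instead $u<w$, the major edge from $v_b$ goes to $w_b$ while the red edge $u_a\to v_b$ is minor, matching the minor edge prescribed by case B. Applying the inductive hypothesis to $[u_a,w_b]$, which has strictly smaller maximum TD number $w<v$ and also satisfies $u<w$, gives either the direct major edge $u_a\to w_b$ (case A for the predecessor) or a chain $u_a\to m_1\to\cdots\to m_J\to w_b$ through type $b$ internal nodes with positions $u_a<w_b<(m_J)_b<\cdots<(m_1)_b$ (case B(ii) for the predecessor). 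In either situation, appending the major edge $w_b\to v_b$ extends the chain to $u_a\to m_1\to\cdots\to m_J\to w_b\to v_b$, and inserting the placement inequality $u_a<v_b<w_b$ into the position order produces precisely the linear extension demanded by case B(ii) for the current segment, with $w$ becoming the new rightmost internal TD number $n_I$.

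The main obstacle is not conceptual but the bookkeeping check that for $u\neq v$ the predecessor segment is indeed unique and of the stated form $[u_a,w_b]$ with $w<\max(u,v)$, so that the induction is well-founded with strictly smaller $\max$, and that the same argument truly runs for the symmetric case $u>v$ after interchanging colours and left/right. Once these two points are pinned down, the rest is concatenation of the inductive inequalities with the single new placement inequality and the extension of the chain by one major edge, so no nontrivial calculation remains.
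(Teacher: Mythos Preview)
Your approach is correct and is essentially the same induction as the paper's, just organized in the opposite direction: the paper fixes a step $m$, assumes the lemma for all segments with both TD numbers $<m$, and then checks every segment produced by placing $m_a$ or $m_b$ (its Cases II--IV); you instead fix the segment $[u_a,v_b]$, identify the unique predecessor $[u_a,w_b]$ on which the later endpoint was placed, and pull the conclusion back. Your sub-cases $u>w$ versus $u<w$ correspond exactly to the paper's sub-cases IIa/IIIa versus IIb/IIIb, and your ``append one major edge and insert one placement inequality'' is precisely the paper's extension of the chain.

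One small repair is needed at the base. Taking $\max(u,v)=0$ as the base case is awkward: the bridging edge between $0_a$ and $0_b$ is not a major edge in the paper's sense, and more importantly, when you run the inductive step on $[0_a,1_b]$ you get predecessor $[0_a,0_b]$ with $u=w=0$, which falls through your dichotomy $u>w$ versus $u<w$. The paper avoids this by starting the induction after the first TD, with the two segments $[0_a,1_b]$ and $[1_a,0_b]$ (both in case A by the special major/minor convention for TD number $1$). Once you shift your base case to $\max(u,v)=1$, the predecessor $[u_a,w_b]$ in the inductive step is never the initial interval, so $u\neq w$ is guaranteed and your dichotomy is exhaustive. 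With that adjustment the argument is complete.
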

\begin{proof}
We prove this by induction. Initially we start with a single segment $[0_a,0_b]$ and the first TD results in two segments $[0_a,1_b]$ and $[1_a,0_b]$ (such as in Figure \ref{TD_Process}Aii). Now node $1_b$ has major parental node $0_a$ and $1_a$ has major parental node $0_b$. Thus each of these segments has a single major edge connecting the corresponding nodes and so satisfy the conditions of the lemma.

For the induction we next assume that any segment $[u_a,v_b]$ satisfies the conditions of the lemma for all $u,v<m$. For each segment we thus have either a single major edge connecting nodes $u_a$ and $v_b$, or a minor edge connecting them along with a chain of major edges. We then introduce the $m^{th}$ TD duplicating a region with endpoints $m_a$ and $m_b$. We need to check all resulting segments satisfy the Lemma. We have four cases to check.

{\bf Case I}: The entire segment $[u_a,v_b]$ is duplicated or unmodified; then the poset graph is unchanged between nodes $u_a$ and $v_b$ and we have nothing to do.

{\bf Case II}: The breakpoint $m_a$ lies in $[u_a,v_b]$. We thus obtain a new segment $[m_a,v_b]$. A new node $m_a$ then has major and minor parents with TD number $max(u,v)$ and $min(u,v)$. We then have two possibilities depending on whether $u$ and $v$ are connected by a major or minor edge.

{\bf Case IIa}: If they are connected by a major edge then we see that if $u<v$ then we have a new major edge from $m_a \rightarrow v_b$, and segment $[m_a,v_b]$ satisfies criterion A of the Lemma. If $u>v$, then we have a minor edge $m_a \rightarrow v_b$ and a chain of two major edges $v_b \rightarrow u_a \rightarrow m_a$, which satisfy $u_a<m_a<v_b$, and segment $[m_a,v_b]$ matches criterion Bi of the Lemma.

{\bf Case IIb}: Now $u$ and $v$ are connected by a minor edge, along with a chain of major edges as described in the theorem. Then if $u<v$ we have a single major edge $v_b \rightarrow m_a$, and the conditions of the theorem are met. If $u>v$ we have a single minor edge $v_b \rightarrow m_a$ and major edge $u_a \rightarrow m_a$ with order $u_a<m_a<v_b$. If we combine this condition with the inductive hypothesis of the theorem; $(n_{1})_a<(n_{2})_a<...<(n_{I})_a<u_a<v_b$, we obtain $(n_{1})_a<(n_{2})_a<...<(n_{I})_a<u_a<m_a<v_b$, which again has the correct structure.

{\bf Case III}: If the breakpoint $m_b$ lies in $[u_a,v_b]$, a parallel set of reasoning to case II applies.

{\bf Case IV}: If both breakpoints $m_a$ and $m_b$ lie in $[u_a,v_b]$, we obtain segments $[u_a,m_b]$ and $[m_a,v_b]$. These are the same segments as cases II and III and the same arguments apply to both segments. 
\end{proof}

We now use this result to describe the inheritance nature of major and minor edges.

\begin{figure}[t!]
\centering
\includegraphics[width=160mm]{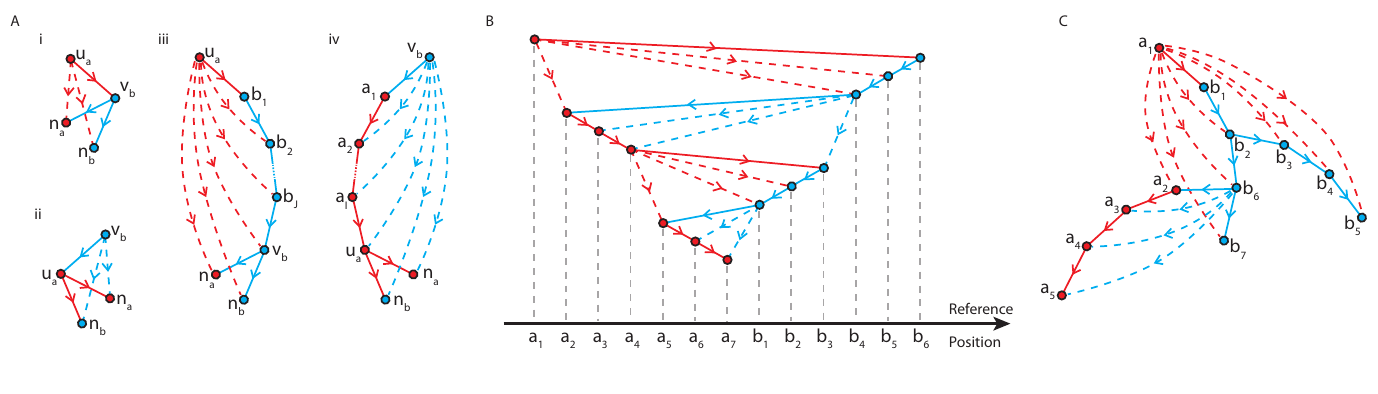}
\caption{Major and minor edge structure. A) The addition of new nodes preserves major-minor structure. B) The nesting structure of a branch of a major tree. C) The general major-minor structure.}
\label{TD_AB_Graphs}
\end{figure}

\begin{corollary}
If any node has a major parental node of type $a$ (resp. $b$), its minor parent is the most recent common ancestor (in the major graph) of opposite type $b$ (resp. $a$).
\label{AB_Structure}
\end{corollary}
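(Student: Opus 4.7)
The plan is to deduce this corollary directly from Lemma \ref{CoreLemma}, which already classifies the structure of every segment $[u_a,v_b]$ arising in the TD process. The key bridge is that when any node $x$ is first created, it is placed on some segment $[u_a,v_b]$, with major parent being the endpoint of higher TD number and minor parent the other; so identifying the minor parent reduces to locating the endpoint of opposite type within the major-graph chain that Lemma \ref{CoreLemma} guarantees between $u_a$ and $v_b$.

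First I will handle the case in which $x$'s major parent is of type $a$, so $u>v$ and the minor parent is $v_b$. By Case A or Case B(i) of Lemma \ref{CoreLemma}, the endpoints $u_a$ and $v_b$ are either directly joined by a major edge $v_b \to u_a$, or connected by a chain of major edges $v_b \to (n_1)_a \to (n_2)_a \to \cdots \to (n_I)_a \to u_a$ in which every intermediate node is of type $a$. Reading this chain backwards, the ancestors of $u_a$ in the major graph encountered in order are $(n_I)_a,\ldots,(n_1)_a,v_b$; the first ancestor whose type differs from that of the major parent is precisely $v_b$, which is the minor parent. The symmetric situation in which the major parent is of type $b$ is handled identically using Case B(ii), with the chain consisting of type $b$ intermediaries and opposite-type root $u_a$.

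Since the inductive bookkeeping is already done inside Lemma \ref{CoreLemma}, I expect the main obstacle to be interpretive rather than technical: making explicit that the corollary's phrase ``most recent common ancestor of opposite type'' coincides with ``first node of opposite type met when walking up major edges from the major parent.'' The only genuinely delicate corner is the initial segment $[0_a,0_b]$, where $u=v=0$ and major/minor are fixed by the tie-breaking convention in Section~2; here I would verify the statement by hand for $1_a$ and $1_b$, treating the two major-tree roots $0_a$ and $0_b$ as the maximal ancestors of their respective trees so that the corollary's ``nearest ancestor of opposite type'' is well-defined in this degenerate base case as well.
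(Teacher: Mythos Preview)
Your proposal is correct and follows essentially the same approach as the paper: both deduce the corollary directly from Lemma~\ref{CoreLemma} by examining the segment $[u_a,v_b]$ on which the node $x$ is created and using the guaranteed major-edge chain between the two endpoints. Your two-case split (by type of the major parent, absorbing Cases A and B of the lemma together) is a mild compression of the paper's four-case split (by the sign of $u-v$ and by whether the endpoints are joined by a major or minor edge), and your explicit remark about the tie-breaking convention for the base segment $[0_a,0_b]$ covers a boundary case the paper leaves implicit.
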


\begin{example}
Consider the branch in Figure \ref{TD_AB_Graphs}C. Node $a_3$ has a major type $a$ parental node $a_2$. The most recent type $b$ ancestor of $a_3$ is node $b_6$, which is its minor parent. Node $b_5$ has a major type $b$ parental node $b_4$, we have to go back to node $a_1$ for its most recent type $a$ ancestor, its minor parental node.
\end{example}

\begin{proof} (\emph{of Corollary \ref{AB_Structure}})
Now by Lemma \ref{CoreLemma} any two nodes $u_a$ and $v_b$ bridging a segment $[u_a,v_b]$ are linked by a major or a minor edge. If a new node  $x \in \{n_a,n_b\}$ corresponding to a new breakpoint in this interval is formed, $u_a$ and $v_b$ are the major and minor parents, in some order. We have four cases to check:

{\bf Case I}: ($u<v$, major edge from $u_a$ to $v_b$). Then $x$ has minor parent $u_a$ and major parent $v_b$. The minor parent $u_a$ is then connected to $x$ by the chain of major edges $u_a \rightarrow v_b \rightarrow x$. Node $x$ has a major parent of type $b$ and the minor parent $u_a$ is the most recent ancestor of type $a$ in the major graph (see Figure \ref{TD_AB_Graphs}Ai).

{\bf Case II}: ($u>v$, major edge from $v_b$ to $u_a$). Analogous to Case I; swap $u$ and $v$, and swap $a$ and $b$ in argument (see Figure \ref{TD_AB_Graphs}Aii).

{\bf Case III}: ($u<v$, minor edge from $u_a$ to $v_b$). Then by Lemma \ref{CoreLemma} minor node $u_a$ is connected to major $v_b$ by a chain of major edges of the form $u_a \rightarrow (n_{1})_b \rightarrow (n_{2})_b \rightarrow...\rightarrow (n_{I})_b \rightarrow v_b$ for some internal nodes of type $b$. Now node $x$ has major parental node $v_b$ so there is also a major edge $v_b \rightarrow x$. Together we have the chain of major edges $u_a \rightarrow (n_{1})_b \rightarrow (n_{2})_b \rightarrow...\rightarrow (n_{I})_b \rightarrow v_b \rightarrow x$. We then find $x$ has a major of type $b$ and the minor $u_a$ is the most recent ancestor of type $a$ in the major graph (see Figure \ref{TD_AB_Graphs}Aiii).

{\bf Case IV}: ($u>v$, minor edge from $v_b$ to $u_a$). Analogous to Case III; swap $u$ and $v$, and swap $a$ and $b$ in argument (see Figure \ref{TD_AB_Graphs}Aiv).
\end{proof}

We can now explain the sense in which minor edges can be removed from the Hasse diagram. Specifically, we find that any set of nodes connected by a directed chain of major edges has a single ordering. More precisely:

\begin{corollary}
Consider any single directed chain of major edges connecting nodes $\{a_i,b_j:i=1,...,I,j=1,...,J\}$ where $a_i$ are nodes of type $a$ and $b_j$ are nodes of type $b$. Suppose furthermore that these nodes are in some order such that $a_i$ is an ancestor of $a_{i+1}$ for $i=1,2,...,I-1$, and $b_j$ is an ancestor of $b_{j+1}$ for $j=1,2,...,J-1$. These nodes have a single linear extension of the form:

$a_1<a_2<....<a_I<b_J<....<b_2<b_1$.
\label{SingleBranch}
\end{corollary}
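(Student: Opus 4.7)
The plan is to analyse the directed chain edge-by-edge using the type (colour) of each major edge in the Hasse diagram. Recall that a type $a$ (red) edge retains its orientation from the 2d-tree and therefore runs from a type $a$ ancestor to a descendant of either type, whereas a type $b$ (blue) edge is reversed in the Hasse diagram and therefore runs from a descendant of either type to a type $b$ ancestor. Equivalently, any Hasse edge whose source is type $b$ must itself be blue and hence have a type $b$ target, and any Hasse edge whose target is type $a$ must be red and hence have a type $a$ source.

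The key observation, and the main obstacle in the proof, is that this colouring rules out a transition from a $b$-node to an $a$-node anywhere along the directed chain: such a step would demand a Hasse edge with a type $b$ source (forcing it to be blue with a type $b$ target) and simultaneously a type $a$ target. Consequently every $a$-node in the chain precedes every $b$-node. Within the $a$-block every internal edge has both endpoints of type $a$ and therefore must be red, so it joins a type $a$ parent in the 2d-tree to a child and traverses strictly increasing TD numbers. Since $a_i$ is an ancestor of $a_{i+1}$ in the major graph and ancestors have smaller TD numbers, the $a$-block must appear in the Hasse order $a_1 \to a_2 \to \dots \to a_I$. An analogous argument applied to the $b$-block shows every internal edge there is blue, runs from a child to its type $b$ parent in the Hasse diagram, and therefore traverses strictly decreasing TD numbers; combined with the hypothesis that $b_j$ is an ancestor of $b_{j+1}$, this forces the $b$-block to appear as $b_J \to b_{J-1} \to \dots \to b_1$.

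Hence the full Hasse-directed chain reads $a_1 \to a_2 \to \dots \to a_I \to b_J \to \dots \to b_1$. A directed chain in a Hasse diagram totally orders its vertices by transitivity, and any linear extension of the ambient poset must respect these strict inequalities; the displayed order is therefore the unique linear extension of $\{a_i, b_j\}$ consistent with the poset, as claimed.
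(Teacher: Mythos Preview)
Your argument hinges on treating the ``directed chain of major edges'' as a directed path in the Hasse diagram. It is not. The chain in the statement is a root-to-leaf path in the major tree, oriented parent $\to$ child as in the 2d-tree, and along such a path the node types may be freely intermixed---for instance $a_1 \to b_1 \to b_2 \to \cdots \to b_n \to a_2 \to a_3 \to \cdots$ is a perfectly legitimate major chain (this is exactly the situation analysed in the paper's proof, and is explicitly noted just before the statement). When you re-orient each of these edges into Hasse direction, the result is not a directed path: the edge $b_n \to a_2$ becomes $a_2 \to b_n$ while the edge $a_2 \to a_3$ stays as $a_2 \to a_3$, so $a_2$ acquires two outgoing Hasse edges; dually $b_1$ acquires two incoming Hasse edges. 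Your colouring observation (no Hasse edge from a $b$-node to an $a$-node) is correct for single edges, but it does not let you conclude that all $a$-nodes precede all $b$-nodes, because you are not walking along a Hasse-directed path.

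Concretely, the major edges alone impose only $a_1<b_1$, $b_n<\cdots<b_1$, $a_2<b_n$, and $a_2<a_3<\cdots$; they give you \emph{no} relation between $a_1$ and $a_2$, nor between $a_3,\ldots$ and any $b_j$. Multiple linear extensions are compatible with just these constraints. The missing ingredient is precisely the minor edges: Corollary~\ref{AB_Structure} tells you that $a_2$ (and every later $a_i$) has $a_1$ as minor parent, yielding $a_1<a_2$, and that each later $a_i$ has $b_n$ as minor parent, yielding $a_i<b_n$. The paper's proof invokes Corollary~\ref{AB_Structure} at exactly these points to stitch the separate runs together into the single nested order; your argument omits this step and therefore does not establish uniqueness.
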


Thus as we follow any single path down the major tree, the types $a$ and $b$ of the nodes can be intermixed. However, the TD numbers of the $a$ nodes increases down the path, as does the TD numbers of the $b$ nodes. Furthermore, the reference positions of the $a$ nodes increase and $b$ nodes decrease towards each other (see Figure \ref{TD_AB_Graphs}B for an example).

\begin{proof}
Now consider any sub-chain of nodes connected by major edges of the form $a_1 \rightarrow b_1 \rightarrow b_2 \rightarrow...\rightarrow b_n$. Then $b_{i+1}$ has major parent $b_i$ (of type $b$), so $b_{i+1}<b_i$. Also, $b_1$ has major parent $a_1$ (of type $a$) so $a_1<b_1$. We also know that $b_2,...,b_n$ all have $a_1$ as their minor parent by Corollary \ref{AB_Structure}, so $a_1<b_i$ for $i=2,3,...,I$. Together we then have the single order $a_1<b_n<...<b_2<b_1$. If the chain then continues as a chain of type $a$ nodes $b_n \rightarrow a'_1 \rightarrow a'_2 \rightarrow...\rightarrow a'_m$, we similarly find that $a'_1<a'_2<...<a'_m<b_n$. However, $a'_1$ has minor parent $a_1$ by Corollary \ref{AB_Structure} so $a_1<a'_1$. We then find that these two orders combine into the single order $a_1<a'_1<a'_2<...<a'_m<b_n<...<b_2<b_1$. Thus we find that as we move down a chain of nodes connected by major edges, the $a$ and $b$ nodes lie in one single nested structure where the $a$ nodes are increasing and the $b$ nodes are decreasing in reference position as we move down the major graph; a single linear extension.
\end{proof}

We now explain how to count the linear extensions using the major graph. In all that follows ${m \choose r}=\frac{m!}{r!(m-r)!}$ represent binomial coefficients, and ${m \choose m_1,...,m_I}=\frac{m!}{m_1!...m_I!}$ represent multinomial coefficients. There are two situations we need to deal with.

\begin{lemma}
i) Suppose $K$ branches descend from a single node $z$ in the major graph, such that the $k^{th}$ branch contains $m_k$ descendant nodes, and none of the $K$ daughter nodes of $z$ are connected by a fence. Then the number of linear extensions involving the associated $m+1$ breakpoints is ${m \choose m_1,...,m_K}\prod\limits_{k=1}^K\phi_k$, where $m=\sum\limits_{k=1}^K m_k$, and $\phi_k$ is the number of linear extensions associated with the $m_k$ nodes in branch $k$.

ii) Suppose two of the branches descending from a single node $z$ in the major graph contain $m_1$ and $m_2$ descendant nodes, respectively, and the two daughter nodes of $z$ in these branches are connected by a fence. Then the number of linear extensions involving the associated $m+1$ breakpoints is $({m \choose m_1}-1)\phi_1 \phi_2$, where $m=m_1+m_2$, and $\phi_1$ and $\phi_2$ are the number of linear extensions associated with the $m_1$ and $m_2$ nodes in the respective branches.
\label{CombinatoricsLemma}
\end{lemma}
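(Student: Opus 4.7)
The plan is to treat the major graph together with fences as the effective Hasse diagram --- Corollaries~\ref{AB_Structure} and~\ref{SingleBranch} already ensure that minor edges introduce no new order relations --- and then exploit a structural observation: in the sub-poset induced on $\{z\}\cup(\text{descendants of }z)$, the node $z$ is always an extremum. Concretely, for any chain of major edges starting at $z$ and running down to a descendant $x$, Corollary~\ref{SingleBranch} gives a single linear extension in which the type $a$ nodes appear in increasing reference order and the type $b$ nodes in decreasing order; whichever type $z$ has, it sits at the appropriate end, so $z<x$ if $z$ is type $a$ and $z>x$ if $z$ is type $b$. Thus in every linear extension of the $m+1$ breakpoints, $z$ is forced either to the first or to the last position, and the counting reduces to orderings of the $m$ descendants.

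For part~(i), I would next verify that under its hypothesis there are no order relations between nodes lying in different branches. Major edges go only to a node's unique major parent and therefore stay inside a single branch; minor edges are, by Corollary~\ref{AB_Structure} and Lemma~\ref{CoreLemma}, redundant with chains of major edges that also lie inside a single branch; and fences connect $n_a$ and $n_b$ which share a common major parent, so any cross-branch fence would have to join two daughters of $z$, which the hypothesis excludes. Hence the poset on the $m$ descendants decomposes as the disjoint union of the $K$ branch posets and the count is a standard shuffle: choose which positions in $\{1,\dots,m\}$ belong to each branch in $\binom{m}{m_1,\dots,m_K}$ ways and independently pick one of the $\phi_k$ internal orderings, giving $\binom{m}{m_1,\dots,m_K}\prod_{k=1}^K\phi_k$.

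For part~(ii) the two fenced daughters of $z$ necessarily share $z$ as their major parent, so their labels $n_a$ and $n_b$ match their types. Applying the first paragraph to each branch, $n_a$ is the minimum (hence first) element in every linear extension of branch~$1$, while $n_b$ is the maximum (hence last) element in every linear extension of branch~$2$. Under any interleaving, the position of $n_a$ is the smallest index assigned to branch~$1$ and that of $n_b$ is the largest index assigned to branch~$2$; the fence $n_a<n_b$ fails precisely when the smallest branch-$1$ index exceeds the largest branch-$2$ index, forcing branch~$2$ to occupy positions $1,\dots,m_2$ and branch~$1$ to occupy positions $m_2+1,\dots,m$. This bad configuration is unique, so subtracting it from the $\binom{m}{m_1}$ unconstrained position-choices and multiplying by $\phi_1\phi_2$ gives $\bigl(\binom{m}{m_1}-1\bigr)\phi_1\phi_2$. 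The main obstacle is really the structural claim that $z$ is an extremum, together with the bookkeeping that rules out hidden cross-branch constraints; once these are in place both counts reduce to elementary shuffling arguments.
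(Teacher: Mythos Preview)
Your proposal is correct and follows essentially the same approach as the paper: both arguments use Corollary~\ref{SingleBranch} to pin $z$ as an extremum, observe that distinct branches carry no mutual constraints (you spell out the case analysis for major edges, minor edges and fences a bit more explicitly than the paper, which simply asserts it), and then reduce to a shuffle count. For part~(ii) both you and the paper identify $n_a$ as first in its branch and $n_b$ as last in its branch, and isolate the unique interleaving that violates $n_a<n_b$.
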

\begin{proof}
i) We have $\phi_k$ linear extensions associated with branch $k$. If we select one linear extension from each branch, we have, by Corollary \ref{SingleBranch}, $K$ orderings of the form:

\begin{center}
$(x^{(k)}_{i_1})_a<(x^{(k)}_{i_2})_a<...<(x^{(k)}_{i_{m_k}})_b<(x^{(k)}_{i_{m_k+1}})_b$ 
\end{center}

Here $(x^{(k)}_{i_j})_{a/b}$ are the breakpoints represented by the nodes in branch $k$. Now node $z$ is the common ancestor of the $K$ branches and so arises from the earliest TD. Then by Corollary \ref{SingleBranch} either $z=(x^{(1)}_{i_1})_a=...=(x^{(K)}_{i_1})_a$ is the left most node and is of type $a$, or $z=(x^{(1)}_{i_{m_1+1}})_b=...=(x^{(K)}_{i_{m_K+1}})_b$ is the right most node and is of type $b$ (in Figure \ref{TD_AB_Graphs}B for example, the red node from the earliest TD is type $a$ and has the lowest position). Now node $z$ is fixed in position and common to all $K$ branches. Any pair of nodes from different branches are unrestricted relative to each other. Any pair of nodes within a branch $k$ have one relative order from the linear extension selected from the $\phi_k$ possibilities of that branch. We then need to count the number of ways of intercalating $m_1$ nodes from branch $1$, with $m_2$ nodes from branch $2$, through to $m_K$ nodes from the last branch. There are ${m \choose m_1,m_2,...,m_K}$ ways to do this.

ii) We now consider the case of a fence between two daughter nodes $n_a$ and $n_b$ of $z$, which results in the extra condition $n_a<n_b$. We have an ordering from each branch. By Corollary \ref{SingleBranch}, if $z$ is of type $a$ they will take the form:

\begin{center}
$(z)_a<n_a<(x^{(1)}_{i_{1}})_a<...<(x^{(1)}_{i_{m_1-1}})_b$

$(z)_a<(x^{(2)}_{i_{1}})_a<...<(x^{(2)}_{i_{m_2-1}})_b<n_b$ 
\end{center}

Here $(x^{(1)}_{i_j})_{a/b}$ and $(x^{(2)}_{i_j})_{a/b}$ are the breakpoints represented by the nodes descending from $n_a$ and $n_b$, respectively. Now there are ${m \choose m_1}$ ways to interlace these two orders. Furthermore, precisely one of these interlacements contradicts the extra condition $n_a<n_b$, and that is:

\begin{center}
$(z)_a<(x^{(2)}_{i_{1}})_a<...<(x^{(2)}_{i_{m_2-1}})_b<n_b<n_a<(x^{(1)}_{i_{1}})_a<...<(x^{(1)}_{i_{m_1-1}})_b$
\end{center}

We subtract this single order from the count ${m \choose m_1}$ to give the desired result.

The case where $z$ is of type $b$ is similar with the same conclusion.
\end{proof}

Finally we put this information together to count the number of linear extensions arising from the 2d-tree.

\begin{theorem}
Let the nodes $0_a$, $0_b$ and daughter edges be removed from the major graph. For each node $x$ remaining let $x_1,...,x_K$ denote the number of nodes that are present in each of $K$ descending branches. If any pair of daughter nodes are connected by a fence, they contribute a factor ${y_1+y_2 \choose {y_1}}-1$, where $y_1$ and $y_2$ count the number of nodes descending down each branch connected by the fence. These two branches are then treated as a single branch with $y_1+y_2$ daughter nodes. We then associate the number $m(x)={x \choose {x_1,...,x_r}}$ with node $x$. The number of distinct evolutions is then the product of these terms across nodes and fences.
\label{CountTheorem}
\end{theorem}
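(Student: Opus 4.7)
The plan is to reduce the count of TD-evolutions for a fixed word to a count of linear extensions of a poset, and then to evaluate that count via a bottom-up recursion on the major graph. By Lemma \ref{HasseCor} the TD-evolutions corresponding to the word are in bijection with the linear extensions of the associated Hasse diagram, so our goal is to count the latter. Since $0_a$ is the unique source and $0_b$ the unique sink of this diagram, every linear extension begins with $0_a$ and ends with $0_b$; neither position contributes any combinatorial choice, which explains why the statement deletes these two nodes and their incident edges before computing anything.

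Next I would show that the minor edges may also be discarded, leaving only the major edges and fences. Corollary \ref{SingleBranch} shows that the nodes lying along any maximal chain of major edges are totally ordered by the major relations alone, and Lemma \ref{CoreLemma} shows that whenever a minor edge joins two nodes $u_a$ and $v_b$ there is already a chain of major edges imposing the same ordering between them. A straightforward induction on the TD number, parallel to the case analysis of Lemma \ref{CoreLemma}, confirms that the transitive closure of the major-plus-fence relation coincides with the transitive closure of the full Hasse relation, so the set of linear extensions is unchanged when the minor edges are removed.

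With the poset replaced by the major graph together with its fences, I would apply Lemma \ref{CombinatoricsLemma} recursively from the leaves upward. Let $L(z)$ denote the number of linear extensions of the subposet consisting of $z$ together with all its descendants in the major graph (with the induced fences). The base case $L(z)=1$ holds for leaves. For an internal node $z$ whose descending branches have sizes $m_1,\dots,m_K$ and extension counts $L(z_1),\dots,L(z_K)$, part (i) of Lemma \ref{CombinatoricsLemma} gives $L(z)=\binom{m}{m_1,\dots,m_K}\prod_{k=1}^K L(z_k)$ when no daughter nodes are fenced, while part (ii) replaces the binomial factor associated with a fenced pair of daughters of sizes $y_1,y_2$ by $\binom{y_1+y_2}{y_1}-1$ and then treats the two branches as a single combined branch of size $y_1+y_2$ when computing higher multinomials. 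Unrolling this recursion gives precisely the product over nodes and fences described in the statement.

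The principal obstacle is the second step: verifying cleanly that no linear extensions are gained or lost when the minor edges are discarded. Although Lemma \ref{CoreLemma} handles this for each individual segment at the moment it is created, one must confirm the correspondence persists as subsequent duplications subdivide existing segments and introduce new minor edges. The natural device is an induction on the TD number that updates the major-plus-fence closure in parallel with the four cases of Lemma \ref{CoreLemma}, checking that any new minor edge is implied by a chain of major edges that either already exists or is created simultaneously in the same step.
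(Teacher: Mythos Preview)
Your proposal is correct and follows essentially the same route as the paper: reduce to counting linear extensions, strip to the major graph, and apply Lemma~\ref{CombinatoricsLemma} recursively from the leaves upward, noting that $0_a$ and $0_b$ contribute trivial factors.

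One remark on what you flag as the principal obstacle. You propose a fresh induction parallel to Lemma~\ref{CoreLemma} to confirm that discarding minor edges preserves the set of linear extensions, but this is already dispatched by Corollary~\ref{AB_Structure}, which you do not invoke: it says the minor parent of any node is an ancestor of that node in the major graph. Hence every minor edge runs between a node and one of its major-graph ancestors, never between two nodes lying in distinct branches below a common vertex $z$. That is precisely the fact Lemma~\ref{CombinatoricsLemma} relies on when it asserts that nodes from different branches are unrestricted relative to one another, so no extra induction is needed. The paper's own proof accordingly just invokes Lemma~\ref{CombinatoricsLemma} directly and observes that the factors at $0_a$ and $0_b$ equal $1$ because each has a single descending major branch (to $1_b$ and $1_a$ respectively).
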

\begin{proof}
The TD process starts with segment $[0_a,0_b]$ which produces two segments $[0_a,1_b]$ and $[0_b,1_a]$ after the first TD. All future segments produced will always have at least one parental node with a TD number greater then $0$ so the only major edge from $0_a$ leads to $1_b$ and the only major edge from $0_b$ leads to $1_a$. Then $0_a$ and $0_b$ both have single branches descending. Now, applying Lemma \ref{CombinatoricsLemma} to any node with a single descending branch containing $n$ nodes results in a combinatorial term of the form $\frac{n!}{n!}=1$. The combinatorial factors from $0_a$ and $0_b$ can thus be ignored. For the remaining nodes we see from Lemma \ref{CombinatoricsLemma} that the orders $\phi_m$ associated with nodes in individual branches are multiplied into the combinatorial terms (such as ${m \choose m_1,...,m_K}$) associated with the parental node. We thus multiply the terms of the form   ${m \choose m_1,...,m_K}$ from nodes and ${m \choose m_1,...,m_K}-1$ from fences.
\end{proof}

\begin{example} Consider the word evolution $E=[1 \rightarrow 121 \rightarrow 3121 \rightarrow 3124121]$ with 2d-tree in Figure \ref{TD_Process}B. Once $0_a$ and $0_b$ are removed we have two fences corresponding to TD numbers $1$ and $3$. The restriction to major and fence edges then results in the graph in Figure \ref{TD_Process}D. The upper fence has two nodes attached to one side and six nodes to the other. This results in a count ${8 \choose 2}-1=27$. We note that node $1b$ has three branches descending; one fenceless branch with two nodes, and two branches bridged by a fence; one and two nodes down each branch. The latter two branches with the fence then have ${3 \choose 1}-1=2$ orders and are then treated as a single branch of three nodes. There are then ${5 \choose 2}=10$ ways of interlacing the five positions from the remaining branch with two nodes and amalgamated branch with three nodes. The total number of linear extensions, and so TD-evolutions, associated with word evolution $E$ is then $27\cdot 2\cdot 10=540$.
\end{example}

Note that in the proof we saw that a node with a single descending branch containing $n$ nodes results in a combinatorial factor $\frac{n!}{n!}=1$. This is true in general and explains why combinatorial terms from nodes with one descending branch were ignored in this example.

We thus now can count both the number of TD words, and the number of distinct evolutions for each word. We next consider how to combine this information and count the total number of evolutions for a given number of TDs. 

%%%%%%%%%%%%%%%%%%%%%%%%%%%%%%%%%%%%%%%%%%%%%%%%%%%%%%%%%%%%%%%%%%%%%%%%%%

\section{The Size of TD Space}

We have seen that a TD process can be represented as an automaton on words. Furthermore, the number of TD-Evolutions represented by any single word evolution can be obtained from the corresponding major graph using the methods of the previous section. This naturally leads to the problem of determining the total number of TD-Evolutions. For example, in Figure \ref{TD_Evolution_Counts} we see all eleven evolutions that arise from two TDs; three evolutions corresponding to word $12$, three corresponding to $21$ and five corresponding to $121$. The aim of this section is to prove our main discovery:

\begin{theorem} The number $\mathcal{N}_n$ of distinct evolutions arising from $n$ TDs is given by:

\begin{center}
$\mathcal{N}_n=\prod\limits_{k=1}^n(4^k-(2k+1))$
\end{center}

\label{EvoCounter}
\end{theorem}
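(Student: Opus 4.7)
The plan is to establish the multiplicative recursion
\[
\mathcal{N}_n \;=\; \bigl(4^n-(2n+1)\bigr)\,\mathcal{N}_{n-1},
\]
from which the product formula follows by induction on $n$, the base case $\mathcal{N}_1=1=4^1-3$ being immediate from Fig.~\ref{TD_Evolution_Counts}. The recursion will be established by exhibiting a bijection between $\mathcal{N}_n$ and the disjoint union, over $E\in\mathcal{N}_{n-1}$, of an \emph{insertion-data} set $I(E)$ of constant size $4^n-(2n+1)$.

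Given $E\in\mathcal{N}_{n-1}$, I first shift each TD label of $E$ upward by one, producing the auxiliary evolution $F$ of Fig.~\ref{TD_Process}F (with labels $2,\ldots,n$); an element of $I(E)$ then prescribes how to re-insert a new TD labelled $1$ at the start of $F$. This prescription is parameterized by (a) the placement of the pair $(1_a,1_b)$ with $1_a<1_b$ relative to the $m=2(n-1)$ existing non-root breakpoints of $F$, and (b) a subset of the breakpoints strictly enclosed between $1_a$ and $1_b$---the \emph{$1$-nodeset} of Fig.~\ref{TD_Process}H---declaring, for each enclosed breakpoint, which of the two copies of the duplicated region $[1_a,1_b]$ in the induced zig-zag it lies upon. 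The prescription determines an induced evolution $G\in\mathcal{N}_n$ whose major graph is obtained from that of $F$ by an explicit subtree prune-and-graft: the subtrees rooted at the nodes of the $1$-nodeset are detached and regrafted under $1_b$ or $1_a$ (which themselves attach as the unique major children of $0_a$ and $0_b$, respectively) according to their copy assignment.

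The central claim is that this construction is a bijection. Validity of $G$ is checked against Lemma~\ref{CoreLemma} and Corollary~\ref{AB_Structure}, which together guarantee that after pruning and regrafting, the major/minor parental structure on every segment remains consistent with the 2d-tree rules. The inverse map takes $G$, deletes $1_a$ and $1_b$ from its major graph by reattaching their descendants directly to $0_a$ and $0_b$, and shifts labels downward by one; this recovers $F$, while the $1$-nodeset is read off as the set of nodes relocated by the deletion. The delicate point---and, I expect, the main technical obstacle---is that certain copy choices flip the orientation of a later TD (reversed~$\leftrightarrow$~forward, fence~$\leftrightarrow$~non-fence), forcing the $n_a/n_b$ labels on that TD to swap between $F$ and $G$. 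Confirming injectivity and surjectivity under this relabelling will require a careful case analysis on the position of $(1_a,1_b)$ relative to each major-edge chain in $F$ and on how the $1$-nodeset interacts with any fence edges introduced by subsequent fence-forming TDs.

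Once the bijection is established, the count is a direct calculation. Set $M=m+1=2n-1$. A placement with $1_a$ in region $R_i$ and $1_b$ in region $R_j$, for $0\le i\le j\le m$, encloses $k=j-i$ existing breakpoints, and there are exactly $M-k$ such placements. Since each enclosed breakpoint contributes a binary copy choice,
\[
|I(E)|\;=\;\sum_{k=0}^{m}(M-k)\,2^k\;=\;M(2^{M}-1)-\bigl[(M-2)2^{M}+2\bigr]\;=\;2^{M+1}-M-2\;=\;4^n-(2n+1),
\]
using the standard identity $\sum_{k=0}^{N}k\,2^k=(N-1)2^{N+1}+2$. Since this count does not depend on $E$, the recursion is proved and the theorem follows.
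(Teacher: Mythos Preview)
Your overall strategy---establish the recursion $\mathcal{N}_n=(4^n-(2n+1))\mathcal{N}_{n-1}$ by inserting a new first TD---matches the paper's. But your proposed bijection is not well-defined, and the gap is precisely at the point you flag as ``delicate.''

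The problem is your parameterization of the insertion data. You claim that, once the positions of $1_a,1_b$ are fixed, each of the $k$ enclosed breakpoints carries an \emph{independent} binary choice (upper vs.\ lower copy of $[1_a,1_b]$), giving $2^k$ options. This is false. Take $F$ with word $[2\to 232]$ and linear extension $2_a<3_a<3_b<2_b$, and place $1_a$ before $2_a$ and $1_b$ after $2_b$ (so $k=4$). The choice ``$2_a,2_b$ upper; $3_a,3_b$ lower'' yields, by your own recipe, the evolution $G=[1\to 21\to 213]$: TD~2 is a fence on $[0_a,1_b]$ and TD~3 a fence on $[1_a,0_b]$. But deleting symbol $1$ from $G$ gives $[2\to 23]$, not $[2\to 232]$; and your stated inverse (delete $1_a,1_b$ from the major graph, reattach descendants to $0_a,0_b$) produces a major graph with $2_a,2_b$ under $0_a$ and $3_a,3_b$ under $0_b$, which is not the major graph of $F$ either. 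So the forward map does not land in the fibre over $F$, and the inverse does not recover $F$. The copy choices are \emph{not} free: they must respect the $2$d-tree structure of $F$, which is exactly the ancestral-closure and fence constraints of Lemma~\ref{NodeSetsNBetaTrees}. You have conflated the paper's $1$-nodeset (a $\beta$-subtree of the $2$d-tree) with an arbitrary subset of breakpoints that happen to lie in $[1_a,1_b]$.

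The paper's proof does \emph{not} attempt a direct bijection at the TD-evolution level. It works at the word-evolution level: for each $E\in\mathcal{W}_{n-1}$ the induced $E'\in\mathcal{E}(E)$ are parameterized by the constrained $1$-nodesets $\tau$, and the linear-extension count of each $E'$ is $\bigl(\binom{2n}{r}-1\bigr)C(\tau)$ with $r=N_A(\tau)$. The whole weight of the argument is then carried by Theorem~\ref{KernelTheorem}, which shows that $\sum_{\tau:N_A(\tau)=r}C(\tau)=\overline{C}(\varepsilon)=\mathcal{N}(E)$ for every $r$; summing $\binom{2n}{r}-1$ over $r=1,\dots,2n-1$ then produces $4^n-(2n+1)$. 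Your identity $\sum_{k}(M-k)2^k=4^n-(2n+1)$ is a correct arithmetic fact, but it is not connected to the TD combinatorics by the map you describe. To make a bijective proof work you would need, in effect, a bijective proof of Theorem~\ref{KernelTheorem}, and that is where the real content lies.
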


Thus $\mathcal{N}_2=(4^1-(2(1)+1))\cdot(4^2-(2(2)+1))=11$, in agreement with Figure \ref{TD_Evolution_Counts}, for example. The first few terms in this series can be seen in the bottom row of Table \ref{TableCount}. 

%%%%%%%%%%%%%%%%%%%%%%%%%%%%%%%%%%%%%%%%%%%%%%%%%%%%%%%%%%%%%%%%%%%%%%%%%%

\subsection{A Motivating Example}

Before constructing a proof of Theorem \ref{EvoCounter}, we discuss a motivating example. Recall that $\mathcal{W}_n$ is the set of word evolutions on $n$ TDs. Consider the following examples.

\begin{center}
$E = [1 \rightarrow 121 \rightarrow 3121 \rightarrow 3124121]$

$E^{+} = [2 \rightarrow 232 \rightarrow 4232 \rightarrow 4235232]$

$E' = [1 \rightarrow 12 \rightarrow 12312 \rightarrow 1412312 \rightarrow 1412352312]$
\end{center}

The first two word evolutions both use four symbols; $E,E^{+} \in \mathcal{W}_4$. These only differ in the labeling of TDs; all we have done is increase each symbol in $E$ by $1$ to get $E^{+}$. In $E'$ we have a word evolution involving one more TD; $E' \in \mathcal{W}_5$. 

There are two things to note.

Firstly, if we delete the symbol $1$ in $E'$ we recover evolution $E^{+}$. That is, conversely, introducing a new first TD event to $E \in \mathcal{W}_4$ results in $E' \in \mathcal{W}_5$. This suggests we can generate TD-Evolutions in general by the repeated introduction of initial TDs. This leads to the following definition:

\begin{definition} If a new first TD is introduced to word evolution $E \in \mathcal{W}_n$, the resulting evolution $E' \in \mathcal{W}_{n+1}$ is called an \emph{induced} evolution.
\end{definition}

Secondly, the major graph of $E'$ is given in Figure \ref{TD_Process}H. Although we can form this directly from the word evolution $E'$ using the 2d-tree construction from the previous section, we note that Figure \ref{TD_Process}H is a subgraph of the 2d-tree from the original evolution $E$ (Figure \ref{TD_Process}B). This suggests we can get the major trees of induced evolutions from the 2d-trees of the originating evolutions.

This implies in general that there may be a connection between $\mathcal{W}_{n-1}$ and $\mathcal{W}_{n}$, both in terms of word evolutions, and in terms of major graphs. We need to explore both of these links in more detail.

Firstly we observe that for any word evolution there are a range of ways that a new first TD can be introduced. For example, take the trivial TD-Evolution $E=[1]$, and increase the symbols by $1$; $E^{+}=[2]$. We can introduce a new first TD in three ways; $E'=[1 \rightarrow 12]$, $E'=[1 \rightarrow 21]$ or $E'=[1 \rightarrow 121]$. Note that all three word evolutions reduce back to evolution $E^+=[2]$ if all copies of symbol $1$ are deleted. We will show something stronger in general; each single word evolution $E \in \mathcal{W}_{k-1}$ leads to a unique subset of induced evolutions $\mathcal{E} \subset \mathcal{W}_{k}$.

We will secondly show that all the major graphs for the word evolutions of $\mathcal{E}$ can be obtained from the 2d-tree for $E$. Now for any individual word evolution $E$, we can use the major graph $T_{maj}(E)$ to count the number of associated TD-Evolutions using Theorem \ref{CountTheorem}. We will extend this and show that the number of TD-Evolutions corresponding to $\mathcal{E}$ is equal to the number of TD-Evolutions corresponding to $E$ multiplied by a constant factor $4^n-(2n+1)$. Applying this observation recursively to the spaces $\mathcal{W}_{1},\mathcal{W}_{2},...,\mathcal{W}_{n}$ will then be seen to result in Theorem \ref{EvoCounter}.

%%%%%%%%%%%%%%%%%%%%%%%%%%%%%%%%%%%%%

\subsection{Induced Evolutions}

For induced evolutions to be a useful concept, we must establish that any word evolution $E' \in \mathcal{W}_{n+1}$ can be uniquely represented as an induced evolution from some word evolution $E \in \mathcal{W}_{n}$.

\begin{lemma} Let $D(E)$ be the process where we remove all copies of TD symbol $1$ from word evolution $E$ and reduce each symbol by $1$. This process has the following properties:

i) If $E \in \mathcal{W}_{n+1}$, then $D(E) \in \mathcal{W}_{n}$ is a valid word evolution.

ii) For any word evolution $E \in \mathcal{W}_{n}$, there exists a word evolution $E' \in \mathcal{W}_{n+1}$ such that $D(E')=E$.
\label{LemmaMap}
\end{lemma}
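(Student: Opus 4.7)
The plan is to handle the two parts by direct construction, exploiting the fact that $D$ acts symbol-wise and therefore distributes over concatenation: $D(U \cdot V) = D(U) \cdot D(V)$.

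For part (i), I would first verify the base case: the only words producible by a single TD from $W_1 = 1$ are $12$, $21$, and $121$, and each maps under $D$ to $1$, the required initial word of an evolution in $\mathcal{W}_n$. For the inductive step, suppose the $(k{+}1)$-st TD in $E$ has coordinates $(a,b)$ and introduces symbol $k+1$. I would introduce the non-$1$-count position map $\pi(i) = \#\{j \le i : W_k(j) \ne 1\}$ and set $a' = \pi(a-1)+1$, $b' = \pi(b)$. Unfolding the TD automaton formula and pushing $D$ through term by term (using $D(k+1) = k$ since $k+1 \ge 2$) yields
\begin{equation*}
D(W_{k+1}) \;=\; D(W_k)(1{:}a'{-}1)\cdot D(W_k)(a'{:}b')\cdot k\cdot D(W_k)(a'{:}b')\cdot D(W_k)(b'{+}1{:}N'_k),
\end{equation*}
which is exactly the TD automaton applied to $D(W_k)$ with coordinates $(a',b')$ and new symbol $k$. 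Monotonicity of $\pi$ forces $b' \ge a'-1$, so the derived coordinates are admissible and the step is a valid TD.

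For part (ii), I would exhibit a canonical induced evolution. Let $E^{+}$ denote the evolution obtained from $E$ by incrementing every symbol by one (the TD coordinates of $E^{+}$ match those of $E$). Define $E'$ by $W_1' = 1$ and $W_{k+1}' = 1 \cdot W_k^{+}$ for $k = 1,\dots,n$; in particular $W_2' = 12$. The first non-trivial step is the TD with coordinates $(2,1)$; for $k \ge 2$, if the $k$-th TD in $E$ has coordinates $(a_k, b_k)$, the $(k{+}1)$-st TD in $E'$ uses coordinates $(a_k+1, b_k+1)$ on $W_k' = 1 \cdot W_{k-1}^{+}$. A direct substitution into the TD formula verifies that this step produces $1 \cdot W_k^{+} = W_{k+1}'$, so $E'$ is a valid element of $\mathcal{W}_{n+1}$. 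Since $W_k^{+}$ contains no $1$s, $D(W_{k+1}') = D(1) \cdot D(W_k^{+}) = W_k$, so $D(E') = E$ once the empty $D(W_1')$ is discarded.

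The main technical point is the coordinate translation in part (i): verifying that removal of $1$s from a word induces a valid reindexing of TD coordinates. Once the position map $\pi$ is in hand, the TD automaton formula passes through cleanly. Part (ii) is then almost immediate from the invariant $W_{k+1}' = 1 \cdot W_k^{+}$, which reduces the verification to a single substitution.
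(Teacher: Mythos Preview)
Your proposal is correct and follows essentially the same approach as the paper: for (i) you both verify the base case on $\{12,21,121\}$ and then push $D$ through the automaton step $AXB \to AX(n{+}1)XB$, and for (ii) you both exhibit the canonical induced evolution $E' = [1 \to 1X_1^{+} \to 1X_2^{+} \to \cdots]$ obtained by prepending a $1$ to the symbol-incremented words of $E$. The only difference is cosmetic: you carry explicit coordinate bookkeeping (the position map $\pi$ and the shift $(a_k,b_k)\mapsto(a_k{+}1,b_k{+}1)$), whereas the paper works directly at the level of subwords $A,X,B$ without naming indices.
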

\begin{proof}
i) Any evolution $E$ starts with trivial word $1$. The next TD in $E$ results in word evolution $[1 \rightarrow 12]$, $[1 \rightarrow 21]$ or $[1 \rightarrow 121]$. For all three choices, removing the initial $1$ from the evolution leaves us the value $2$, which becomes $1$ when the symbols are reduced in value by $1$, thus we obtain the correct initial word for $D(E)$. Now the word evolution is constructed by the TD word automaton as a mapping of the form $AXB \rightarrow AX(n+1)XB$, for possibly empty subwords $A$, $X$ or $B$, for the $(n+1)^{th}$ TD. If we remove all copies of the symbol $1$ from the subwords $A$, $X$ and $B$, and reduce all symbols by $1$, to give $A'$, $X'$ and $B'$, respectively, we get a mapping of the form $A'X'B' \rightarrow A'X'nX'B'$ which is a valid step in the $n^{th}$ iteration of the TD word automaton, as required.

ii) For any evolution $E=[X_1 \rightarrow X_2 \rightarrow X_3 \rightarrow ... \rightarrow X_{n}]$ from $\mathcal{W}_{n}$ we simply construct $E'=[1 \rightarrow 1 X'_1 \rightarrow 1X'_2 \rightarrow 1X'_3 \rightarrow ... \rightarrow 1X'_{n}]$ where word $X'_i$ is obtained from $X_i$ by increasing each symbol by $1$. This is a valid word evolution in $\mathcal{W}_{n+1}$. Then applying $D$ to $E'$ recovers $E$, as required.
\end{proof}

This allows us to partition the space $\mathcal{W}_{n}$ as follows:

\begin{corollary} Let $\mathcal{E}(E)$ denote the set of induced evolutions from $E$. Then:

i) For any two evolutions $E_1,E_2 \in \mathcal{W}_{n}$, the two corresponding sets of induced evolutions do not overlap; $\mathcal{E}(E_1) \cap \mathcal{E}(E_2) = \phi$. 

ii) The set of induced evolutions satisfies the relation, $W_{n+1}=\bigcup\limits_{E \in W_{n}}\mathcal{E}(E)$.
\label{DisjointUnion}
\end{corollary}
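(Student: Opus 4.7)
The plan is to reduce both parts to Lemma \ref{LemmaMap} by identifying $\mathcal{E}(E)$ with the preimage $D^{-1}(E)\subseteq \mathcal{W}_{n+1}$. By the definition of induced evolution, $E'\in\mathcal{E}(E)$ arises from $E$ by introducing a new first TD — necessarily labeled $1$ — and incrementing every existing symbol by one. The operator $D$ performs exactly the reverse by removing every copy of $1$ and decrementing the remaining symbols. So the two constructions are mutually inverse and $\mathcal{E}(E) = D^{-1}(E)$.

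Granted this identification, part (i) follows immediately: if $E' \in \mathcal{E}(E_1)\cap \mathcal{E}(E_2)$, then $E_1 = D(E') = E_2$, since $D$ is a well-defined function on $\mathcal{W}_{n+1}$. Part (ii) splits into two inclusions. The containment $\bigcup_{E\in \mathcal{W}_n}\mathcal{E}(E) \subseteq \mathcal{W}_{n+1}$ is immediate from the definition. For the reverse inclusion, given any $E'\in \mathcal{W}_{n+1}$, Lemma \ref{LemmaMap}(i) certifies that $E := D(E') \in \mathcal{W}_n$ is a valid word evolution, and then $E' \in D^{-1}(E) = \mathcal{E}(E)$ sits inside the union.

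The one step requiring care is justifying the identification $\mathcal{E}(E) = D^{-1}(E)$. The proof of Lemma \ref{LemmaMap}(ii) exhibits only one particular preimage, namely $[1 \to 1X'_1 \to 1X'_2 \to \cdots \to 1X'_n]$, whereas in general there are many induced evolutions per $E$ (the motivating paragraph lists three induced evolutions of $E=[1]$: $[1\to 12]$, $[1\to 21]$, and $[1\to 121]$). To cover every preimage I would argue that any $E'$ satisfying $D(E')=E$ can be read as an $(n+1)$-step automaton run whose first step produces the trivial word $1$ and whose subsequent $n$ steps, after the deletion and relabeling prescribed by $D$, reproduce the automaton run of $E$ — which is precisely what "introducing a new first TD to $E$" means. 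With that equivalence in place the rest is mechanical, and I do not anticipate any substantial further obstacle.
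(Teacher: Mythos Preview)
Your proposal is correct and follows essentially the same route as the paper: both arguments rest on the observation that $D:\mathcal{W}_{n+1}\to\mathcal{W}_n$ is a well-defined map (Lemma~\ref{LemmaMap}i), so that the sets $\mathcal{E}(E)=D^{-1}(E)$ are automatically disjoint and cover $\mathcal{W}_{n+1}$. Your explicit justification of the identification $\mathcal{E}(E)=D^{-1}(E)$ is in fact more careful than the paper, which treats this as self-evident.
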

\begin{proof}
i) We have shown from Lemma \ref{LemmaMap}i that deletion of symbol $1$ creates is a well defined mapping $D:\mathcal{W}_{n+1}\rightarrow \mathcal{W}_{n}$. Conversely, therefore, we therefore cannot have distinct word evolutions $E_1,E_2 \in \mathcal{W}_{n}$ that produce the same induced evolution $E'$ when a new first TD is introduced; $\mathcal{E}(E_1)$ and $\mathcal{E}(E_2)$ are thus distinct.

ii) We know from Lemma \ref{LemmaMap}ii that for any $E \in \mathcal{W}_{n}$, $\mathcal{E}(E) \in \mathcal{W}_{n+1}$. This implies that $\bigcup\limits_{E \in \mathcal{W}_{n}}\mathcal{E}(E) \subset \mathcal{W}_{n+1}$. Conversely, from Lemma \ref{LemmaMap}i we know that $\bigcup\limits_{E \in \mathcal{W}_{n}}\mathcal{E}(E) \supset \mathcal{W}_{n+1}$.
\end{proof}

Thus we can generate all of the word evolutions in $\mathcal{W}_{n+1}$ as a disjoint union of induced evolutions from $\mathcal{W}_n$. 

We wish to construct the major graphs $T_{maj}(E')$ of all the induced word evolutions $E' \in \mathcal{E}(E)$ from the 2d-tree $T(E)$ of the original evolution. To do this we need to relate the positions of new symbol $1$ in the new evolution $E'$ to the nodes of the 2d-tree $T(E)$. In all that follows $X$ represents unspecified subwords in a word evolution. We have the following definition.

\begin{definition}
Let $Z=\{1_a,1_b,2_a,2_b,3_a,3_b,...,n_a,n_b\}$ be the node labels for a 2d-tree $T(E^{+})$, where $E^{+}$ is the word evolution after the TD numbers have been increased by $1$ in some word evolution $E$. For any evolution $E'$ induced from $E$, a \emph{$1$-nodeset} $N \subseteq Z$ is defined as follows:

i) If the word $XmX$ in word evolution $E^{+}$ becomes word $X1mX$ in induced evolution $E'$, then $m_b \in N$.

ii) If the word $XmX$ in word evolution $E^{+}$ becomes $Xm1X$ in induced evolution $E'$, then $m_a \in N$.

iii) $1_a,1_b \in N$
\label{NodeSet}
\end{definition}

\begin{example} In Figure \ref{TD_Process}F,G we have evolutions: 

\begin{center}
$E^{+}=[{\bf 2} \rightarrow 2{\bf 3}2 \rightarrow {\bf 4}232 \rightarrow 423{\bf 5}232]$

$E'=[1 \rightarrow {\bf 12} \rightarrow 12{\bf 31}2 \rightarrow {\bf 141}2312 \rightarrow 14123{\bf 5}2312]$
\end{center}

Now $2$ in $E^{+}$ becomes $12$ in $E'$, so $2_b \in N$. Similarly, $X3X$ becomes $X31X$ so $3_a \in N$ (see bold symbols above). We see $X4X$ becomes $X141X$, the symbol $4$ picking up a $1$ either side in the induced evolution, so that $4_a,4_b \in N$. Finally we note that $5$ remains isolated from the symbol $1$ so $5_a,5_b \not\in N$. Thus $N=\{1_a,1_b,2_b,3_a,4_a,4_b\}$.
\end{example}

Now each \emph{$1$-nodeset} is a subset of the node labels for the 2d-tree. We find these sets have the following tree like structure:

\begin{lemma}
Let $T(E)$ be the 2d-tree for a word evolution $E$, and $N$ be the \emph{$1$-nodeset} corresponding to an induced evolution $E'$. Then if $x \in N$,

i) If $x$ is not a root node, its parents are in $N$.

ii) If $x$ is the parental node of a fence, at least one of the daughter nodes must be in $N$.

Conversely, any set of nodes $N$ from $T(E)$ satisying i) and ii) is a \emph{$1$-nodeset} for some induced evolution $E'$.

\label{NodeSetsNBetaTrees}
\end{lemma}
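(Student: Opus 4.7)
The plan is to prove both directions of the if-and-only-if by induction on the TD number $m$, tracking how occurrences of the symbol $1$ in the word of $E'$ correspond to nodes of the 2d-tree $T(E)$. For the forward direction (an induced evolution yields $N$ satisfying (i) and (ii)), I would maintain throughout the induction the invariant that, after the word automaton for $E'$ has processed the first $m$ TDs, each occurrence of the symbol $1$ in the current word sits immediately adjacent to some symbol $k \le m$, with the adjacency on the right of $k$ iff $k_a \in N$ and on the left iff $k_b \in N$ (with boundary positions treated as adjacency with the virtual roots $0_a, 0_b$). When the next TD is performed, its breakpoints $n_a, n_b$ land on some segment $[u_a, v_b]$ and a case analysis on where the duplicated window in $E'$ sits relative to the current $1$-occurrences determines the membership of $n_a, n_b$ in $N$. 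The invariant forces any newly-added daughter in $N$ to inherit its $1$-adjacency from an already-in-$N$ parental boundary, giving (i); fences (where no symbols are duplicated in $E^+$) reduce to the case that either a $1$ strictly interior to the fence segment is duplicated (putting both $n_a, n_b$ in $N$) or the new $n$-symbol lands beside a $1$ previously adjacent to $u_a$ or $v_b$ (putting at least one of $n_a, n_b$ in $N$), which is exactly (ii).

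For the converse direction, I would recursively construct $E'$ from a prescribed $N$ satisfying (i), (ii), (iii). Starting from $E^+$, at each TD in turn the choice of whether the corresponding duplicated window in $E'$ contains and/or sits adjacent to a $1$ is read off from the memberships of $n_a, n_b$ in $N$. Property (iii) fixes the initial $1$-placement, property (i) guarantees that the $1$s required to be adjacent at the moment of TD $n$ have actually been placed adjacent to the parental boundaries at earlier steps via the inductive invariant applied to the parents, and property (ii) is the precisely right compatibility condition at fences, where the non-duplicating step in $E^+$ would otherwise have no mechanism to pass a $1$-adjacency from an in-$N$ parent down to the daughters. Read together, the inductive choices at each TD are forced by $N$ and are always legal, so the resulting word evolution is induced from $E$ and its $1$-nodeset coincides with $N$.

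The main obstacle I anticipate is the bookkeeping required to formalize the $1$-position invariant across all case combinations --- forward versus reversed TD direction, major versus minor parental status, fenced versus unfenced placement --- without case explosion. The fence case in particular is the delicate hinge: showing that condition (ii) is the tight constraint, neither too weak nor too strong, to account for a $1$ trapped strictly inside a fence segment and duplicated into both daughter words is the combinatorial heart of the lemma and will demand the most careful case analysis.
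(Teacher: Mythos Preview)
Your proposal is correct and takes essentially the same approach as the paper: a step-by-step case analysis through the TD evolution, tracking where the symbol $1$ sits in the word of $E'$ relative to the newly introduced symbol $n$ and its parental segment endpoints $u,v$, splitting into fence versus non-fence cases, and reading the converse off the same case analysis by making the evolution-step choice dictated by $N$. Your explicit invariant about $1$-adjacency is a mild sharpening of what the paper leaves implicit when it writes ``we have $Xu1vX$'' versus ``we have $XuvX$'', but the logical skeleton is the same.
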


Thus the \emph{$1$-nodesets} have the tree like property that for any node belonging to the \emph{$1$-nodesets}, all its ancestors are also present. In particular, the root nodes belong to $N$. Consider the example above; $N=\{1_a,1_b,2_b,3_a,4_a,4_b\}$, these are the (solid) nodes in Figure \ref{TD_Process}H which satisfy these criterion. The two roots $1_a$ and $1_b$ are in $N$. There is a fence between $2_a$ and $2_b$, which have parental nodes $1_a,1_b$ that are members of node set $N$. At least one of $2_a,2_b$ must therefore be in $N$, and in this case $2_b$ is.

\begin{proof} (\emph{of Lemma \ref{NodeSetsNBetaTrees}})
Consider the $n^{th}$ TD in evolution $E$. We have two cases to consider. 

{\bf Case I}: The TD is not a fence. Then we have a node $n_a$ with parents $u_a$ and $v_b$, and node $n_b$ with parents $u'_a$ and $v'_b$. We also have step $XuvXu'v'X \rightarrow XuvXu'nvXu'v'X$ in the corresponding word evolution $E$, where the somatic connections from $v$ to $u'$ (inclusive) are duplicated. Note that subword $XuvX$ represents somatic connections across the region containing the breakpoint $n_a$, and $Xu'v'X$ similarly covers breakpoint $n_b$.

We consider changes to the parts $XuvX$ and $Xu'v'X$ of word $XuvXu'v'X$ when symbol $1$ is introduced in the induced evolution separately.

{\bf Case Ia}: If we have word $XuvX$ after symbol $1$ has been introduced into $E'$, then by Definition \ref{NodeSet}, $u_a,v_b \not\in N$. We then find we have evolution step $XuvX \rightarrow XuvXnvX$ in $E'$ and so symbol $n$ is not adjacent and left of symbol $1$ and we find that $n_a \not\in N$. That is, if the major parent of $n_a$ is not in $N$, $n_a$ cannot be in $N$.

If we have $Xu1vX$ after symbol $1$ has been introduced, then by Definition \ref{NodeSet}, $u_a,v_b \in N$. That is, the parents of $n_a$ are in $N$. Now we have two possibilities. Firstly, we can have evolution step $Xu1vX \rightarrow Xu1vXnvX$ in $E'$, where the somatic connection $1$ is not duplicated. In this case we find symbol $n$ is not adjacent to a $1$ so by Definition \ref{NodeSet}, $n_a \not\in N$. Secondly, we can have evolution step $Xu1vX \rightarrow Xu1vXn1vX$, where the somatic connection $1$ is duplicated. In this case we find symbol $n$ is adjacent and left of symbol $1$ so by Definition \ref{NodeSet}, $n_a \in N$. Thus if the parents of $n_a$ are in $N$, $n_a$ may or may not be in $N$ depending upon the choice of the induced evolution.

Note that the converse is also true and if the parents of $n_a$ are in $N$, we select the evolution step depending on whether $n_a$ is in $N$.

{\bf Case Ib}: The argument for node $n_b$, which depends upon $Xu'v'X$, is analogous, with the same conclusions.

{\bf Case II}: Consider the case that the $n^{th}$ TD results in a fence.

In that case we have a step $XuvX \rightarrow XunvX$ in $E$. Then if we have corresponding step $XuvX \rightarrow XunvX$ in induced evolution $E'$ we find that $u_a,v_b \not\in N$ by Definition \ref{NodeSet} and both $n_a,n_b \not\in N$.

Alternatively we may find that we have a step of the form $Xu1vX \rightarrow X$ in $E'$. Then parental nodes $u_a,v_b \in N$ and we have three possibilities to consider.

We may have $Xu1vX \rightarrow Xu1nvX$. Here the $1$ is not duplicated, but we find $n$ is to the right of a $1$ and so $n_b \in N$.

We may have $Xu1vX \rightarrow Xun1vX$. Here the $1$ is not duplicated, but we find $n$ is to the left of a $1$ and so $n_a \in N$.

Lastly, we may have $Xu1vX \rightarrow Xu1n1vX$. Here the $1$ is duplicated and both $n_a,n_b \in N$.

Thus when the parent nodes of a fence are in $N$, at least one of the daughter nodes $n_a$ or $n_b$ must be.

Conversely, when the parent node and one or more of $n_a$ or $n_b$ are in $N$, we select the corresponding evolutionary step.
\end{proof} 

We can now show how to construct the major graph $T_{maj}(E')$ from the parental 2d-tree $T(E)$. 

\begin{lemma}
For any evolution $E'$ induced from $E$, let $N$ be the corresponding $1$-nodeset obtained from Lemma \ref{NodeSetsNBetaTrees}. Let $T(E)$ be the 2d-tree corresponding to $E$. The major graph $T_{maj}(E')$ is constructed as follows.

i) Select all nodes from $T(E)$ and increase each TD number in the node labels by $1$. 

ii) If any type $a$ (resp. $b$) node (that is not a root) is a member of $N$, select the parental edge of the same type, $a$ (resp. $b$).

iii) If any type $a$ (resp. type $b$) node (that is not a root) is immediately adjacent (but not in) $N$, select the parental edge from the opposite type, $b$ (resp. $a$).

iv) If any node $n_a$ (resp. $n_b$) is neither a member of, or immediately adjacent to, $N$, select the major edge of $n_a$ from $T(E)$.

v) If $n_a$ and $n_b$ are connected by a fence in $T(E)$ (for TD number $n \ge 2$), select the fence if and only if $n_a \not\in N$ or $n_b \not\in N$. 

vi) Place a fence between $1a$ and $1b$ and swap these two node labels.
\label{TreeOperations}
\end{lemma}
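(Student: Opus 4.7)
The plan is to proceed by induction on the number $n$ of TDs in $E$, constructing $T(E')$ incrementally and verifying at each step that the parental and fence selections dictated by rules (i)--(vi) reproduce the major graph obtained by the direct 2d-tree construction for $E'$. The guiding intuition is that the $1$-nodeset $N$ from Lemma \ref{NodeSetsNBetaTrees} tracks precisely which breakpoints of $E$ are placed, in $T(E')$, on segments whose endpoints have been modified by the insertion of the new breakpoints $1_a, 1_b$ from $E'$'s first TD.

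For the base case ($n=0$), $E'=[1]$: the first TD places $1_a, 1_b$ between $0_a, 0_b$ with a fence, and with major parents $0_b$ and $0_a$ respectively. Stripping the roots gives exactly the structure produced by rule (vi) applied to the trivial $T(E)$; the swap of shifted labels encodes that in $T(E')$ the new $1_a$ (type $a$) inherits the major subtree that $0_b$ rooted in $T(E)$, while $1_b$ inherits that rooted at $0_a$.

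For the inductive step, assume the result for $E$ with $n-1$ TDs. The $n$-th TD of $E$ creates $n_a, n_b$; the corresponding $(n+1)$-th TD of $E'$ creates its breakpoints at analogous positions, but the hosting segment in $T(E')$ can differ because $1_a, 1_b$ may have been inserted nearby. I would use the case analysis from Lemma \ref{NodeSetsNBetaTrees}, treating (say) $n_a$ (the argument for $n_b$ is symmetric). If $n_a$ and its parents all lie outside $N$, the word step in $E'$ mirrors that in $E^{+}$, so the hosting segment is unchanged modulo shift, and the major edge is inherited from $T(E)$ (rule iv). If $n_a \in N$, the analysis in Case I of Lemma \ref{NodeSetsNBetaTrees} forces one endpoint of the hosting segment in $T(E')$ to become $1_a$ or $1_b$; since these have the smallest TD-number the opposite endpoint is major, and a direction check shows it is the same-type parent in shifted $T(E)$, giving rule (ii). If $n_a$ is adjacent to but not in $N$, the replacement happens on the opposite side, flipping the major/minor roles and yielding rule (iii).

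For the fence rule (v), a fence between $n_a, n_b$ in $T(E)$ corresponds to both breakpoints landing on the same segment; it persists in $T(E')$ iff the $1$ somatic connection is not duplicated across the corresponding subword, which by Case II of Lemma \ref{NodeSetsNBetaTrees} occurs exactly when at least one of $n_a, n_b$ lies outside $N$. The main obstacle is the careful bookkeeping for the major/minor flipping when a $1$-node enters as a segment endpoint, and checking that the label swap of rule (vi) propagates consistently through all subsequent selections further down the tree; this ultimately reduces to the observation that $1_a$ and $1_b$ in $T(E')$ swap the roles of the shifted $0_b$ and $0_a$ in the major-tree topology of $T(E)$.
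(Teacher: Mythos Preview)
Your proposal is correct and follows essentially the same approach as the paper: both arguments reduce to a case analysis on whether a node lies in $N$, is adjacent to $N$, or neither, and then trace the corresponding word-evolution step in $E'$ to identify the hosting segment and hence the major parent (exploiting that the new symbol $1$ carries the smallest TD-number). The only cosmetic difference is that you package this as an explicit induction on the number of TDs, whereas the paper verifies each of rules (i)--(vi) directly; your treatment of rule (vi) via the base case is terser than the paper's, which spells out the label swap by examining word steps of the form $X\rightarrow nX$ and $X\rightarrow Xn$, but the underlying observation---that the shifted roots $0_a,0_b$ must be identified with $1_b,1_a$ respectively to match the major-tree topology of $T(E')$---is the same.
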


\begin{example} Consider again the original 2d-tree $T(E)$ in Figure \ref{TD_Process}B, where $E$ is the evolution in Figure \ref{TD_Process}E. We wish to construct and the major graph $T_{maj}(E')$ (of induced evolution $E'$) given in Figure \ref{TD_Process}H by applying the Lemma. We found the $1$-nodeset corresponding to evolution $E'$ previously as $N=\{1_b,1_a,2_b,3_a,4_a,4_b\}$, the black nodes in Figure \ref{TD_Process}H. Then to construct $T_{maj}(E')$ we take the nodes of $T(E)$ and first increase the TD numbers by $1$, swap the two labels with TD number $1$, and place a fence between them. Node $2_a \not\in N$ is adjacent to $N$ so we select the edge from the node of opposite $b$ type by Lemma \ref{TreeOperations}iii. This was $0_b$, which is now mapped to $1_a$, so we select edge $1_a \rightarrow 2_a$. Node $2_b \in N$, so we select the parental edge of same node type $b$. This was also $0_b$, so we select the edge from mapped node $1_a \rightarrow 2_b$. By Lemma \ref{TreeOperations}v, we furthermore select the fence between nodes $2_a$ and $2_b$. Now $3_a \in N$ thus we select the edge from type $a$ parent, the node $1_b$ (mapped from $0_a$). Node $3_b$ is not in or adjacent to $N$ so we select the major edge from $T(E)$; $2_a \rightarrow 3_b$. Now $4_a,4_b \in N$ so we select the edges from parental nodes of same type; $1_b \rightarrow 4_a$ and $2_b \rightarrow 4_b$ parental node $2_b$. Nodes $5_a,5_b$ are adjacent to $N$ so we select its parent edges of opposite type; $2_b \rightarrow 5_a$ and $3_a \rightarrow 5_b$.
\end{example}

Observe that the differences between $T(E)$ and $T_{maj}(E')$ are a form of subtree prune and graft operations \cite{SempleSteel}; when the major edge is swapped for the minor edge we are pruning from the major parental node and grafting to the minor parental node.

\begin{proof}(\emph{of Lemma \ref{TreeOperations}})

i) All the breakpoints from evolution $E$ remain in evolution $E'$ so we inherit the representative breakpoints. The introduction of a new first TD increases each TD number by $1$.

ii) Consider the case that we have a type $a$ node $n_a \in N$. Then we have evolution step $XuvX \rightarrow XuvXnvX$ in $E^{+}$, and $n_a$ has major and minor parents $u_a$ and $v_b$ in $T(E)$ (in some order, depending upon whether $u>v$). This evolution step becomes $Xu1vX \rightarrow Xu1vXn1vX$ in $E'$. Then the somatic connection $1$ is duplicated and breakpoint $n_a$ occurs between somatic connections $u$ and $1$. The major and minor parents are then $u_a, 1_b$ in $T(E')$. Now $u>1$ so the major parent of $n_a$ in $T(E')$ is the node $u_a$. Thus if we have a type $a$ node $n_a \in N$, we select the parental edge of the same type; $u_a \rightarrow n_a$, irrespective of whether it was the major or minor in $T(E)$. The argument for $n_b$ is analogous.

iii) Consider the case that $n_a$ is adjacent to a node in $N$, that is, its major and minor parents are in $N$. Then we have evolution step $XuvX \rightarrow XuvXnvX$ in $E$ that becomes $Xu1vX \rightarrow Xu1vXnvX$ in $E'$. This time, in the induced evolution, the major and minor parents of $n_a$ are $1_a, v_b$. Now $v>1$ so the major parent of $n_a$ in $E'$ is node $v_b$. Thus if we have a type $a$ node $n_a \in N$, we select the parental edge of the opposite type; $v_b \rightarrow n_a$, irrespective of whether it was the major or minor in the original evolution $E$. The argument for $n_b$ is analogous.

iv) Consider the case that $n_a$ is neither adjacent to a node in, or a member of $N$. Then we have evolution step $XuvX \rightarrow XuvXnvX$ in $E$ that becomes $XuvX \rightarrow XuvXnvX$ in $E'$. Now the major/minor status of $n_a$ does not change from the original. The argument for $n_b$ is analogous.

v) If $n_a$ and $n_b$ are connected by a fence we have a step of the form $XuvX \rightarrow XunvX$ in $E$. The corresponding step in the induced evolution $E'$ takes one of four forms. Firstly, if $Xu1vX \rightarrow Xu1n1vX$ in $E'$, then $n$ is adjacent to $1$ on both sides, so $n_a,n_b \in N$. Note that $n$ has duplicated symbol $1$, so we do not have a fence in $T_{maj}(E')$. Secondly, if we have $Xu1vX \rightarrow Xu1nvX$ in $E'$, then $n_a \not\in N$ and $n_b \in N$. Note that $n$ has not duplicated the symbol $1$ and we still have a fence. Thirdly, the evolution $Xu1vX \rightarrow Xun1vX$ in $E'$ similarly preserves the fence, with $n_a \in N$ and $n_b \not\in N$. Finally, if we have $XuvX \rightarrow XunvX$ in $E'$, the fence is preserved and $n_a,n_b \not\in N$. Thus $T_{maj}(E')$ contains the fence if and only if at least one of $n_a \not\in N$ or $n_b \not\in N$ is true.

vi) Firstly note that the initial TD in any evolution must occur on the single reference segment, and so must be fence because there are no prior TDs to duplicate, thus we place a fence between nodes $1_a$ and $1_b$. 

Consider the $n^{th}$ TD for some $n \ge 2$. Note that the only way that node $n_a$ or $n_b$ can have a parental node $0_a$ or $0_b$ is to have a step of the form $X \rightarrow nX$ or $X \rightarrow Xn$ in word evolution $E$. Consider first the step $X \rightarrow nX$. Note that $n$ must represent a fence because there are no symbols to the left of $n$ which could have been duplicated. Then $n_a$ and $n_b$ have minor parents $0_a$ and some major parent $v_b$. The induced evolution can then be in one of three forms.

Firstly, we can have corresponding step $1X \rightarrow n1X$ in $E'$. In this case, from Definition \ref{NodeSet} we find $n_a$ is in $N$ and so is connected to its type $a$ parent $0_a$ by ii) above. Now because $n$ is a fence, $n_b$ has the same parents as $n_a$, so is adjacent to $N$ in $T(E)$. Then using iii) above we find $n_b$ is connected to its type $a$ parent, also $0_a$. However, constructing the major tree directly from the 2d-tree corresponding to word evolution $E'$, we find that $n$ is a fence with major parent $1_b$. Thus to get an equivalent form from the original 2d-tree, we map $0_a$ to $1_b$.

The case for $1X \rightarrow 1nX$ is similar, resulting in a map from $0_b$ to $1_a$.

For the third choice, the step becomes $1X \rightarrow 1n1X$ in $E'$. We then find that  $n_a,n_b \in N$ by Definition \ref{NodeSet} as $n$ is adjacent to $1$ on both sides. Thus in the major graph for $E'$, $n_a$ is connected to its type $a$ parent $0_a$ and $n_b$ is connected to its $b$ parent $u_b$. However, direct from $E'$ we see that $n_a$ has major parent $1_b$ and $n_b$ has major parent $u_b$, so again we map $0_a$ to $1_b$ for a consistent correspondence.

The argument using step $X \rightarrow Xn$ and node $n_a$ from $E$ is entirely similar with parallel conclusions.
\end{proof}

In summary, we now know that for any word evolution $E \in \mathcal{W}_{n-1}$ there is a unique subset of induced evolutions $\mathcal{E}(E) \in \mathcal{W}_{n}$, each member $E'$ of which corresponds to a $1$-nodeset from the 2d-tree of $E$. We can now use this to produce the major graph $T(E')$ for the induced evolution using Lemma \ref{TreeOperations}. We can then calculate the number of TD-Evolutions associated with each $E'$ from Theorem \ref{CountTheorem}. We thus need to sum the TD-Evolution counts across the set of $1$-nodesets corresponding to $\mathcal{E}(E)$. Whilst this is possible, leading to $4^n-(2n+1)$ induced TD-Evolutions for each word evolution $E$, the proof relies on a more general space of graphs than we have considered so far, which we now introduce.

%%%%%%%%%%%%%%%%%%%%%%%%%%%%%%%%%%%%%%%%%%%%%%%%%%%%%%%%%%%%%%%%%%%%%%%%%%

\subsection{$\beta$-trees}

Firstly we generalize the notion of the 2d-tree obtained from TDs.

\begin{definition}
A \emph{$\beta$-tree} $T$ is any directed graph such that:

i) All nodes and edges are classified as either type $a$ or type $b$

ii) There is a root node ($A$) of type $a$ and a root node ($B$) of type $b$, and all directed edges point away from the roots.

iii) All other nodes have a type $a$ parental node and a type $b$ parental node. The two edges from the parental nodes are also of type $a$ and $b$, respectively. Either the two parents are the two roots, or one parent is a descendant of the other. The edge from the more recent ancestor is the major, the other is the minor.

iv) A type $a$ node and type $b$ node may be linked by a fence if they have the same parental nodes, or are the two roots.
\end{definition}

Thus the 2d-trees defined from TDs are $\beta$-trees, for example. Note that $\beta$-trees are more general; take Figure \ref{TD_Tree_Operations}A,B, for example, they do not have an even number of nodes and cannot arise from a TD process, but satisfy the requirements of a $\beta$-tree. 

Similar to the 2d-tree construction, the major graph $T_{maj}(T)$ of a $\beta$-tree $T$ is the graph obtained when the minor edges are removed.

\begin{figure}[t!]
\centering
\includegraphics[width=140mm]{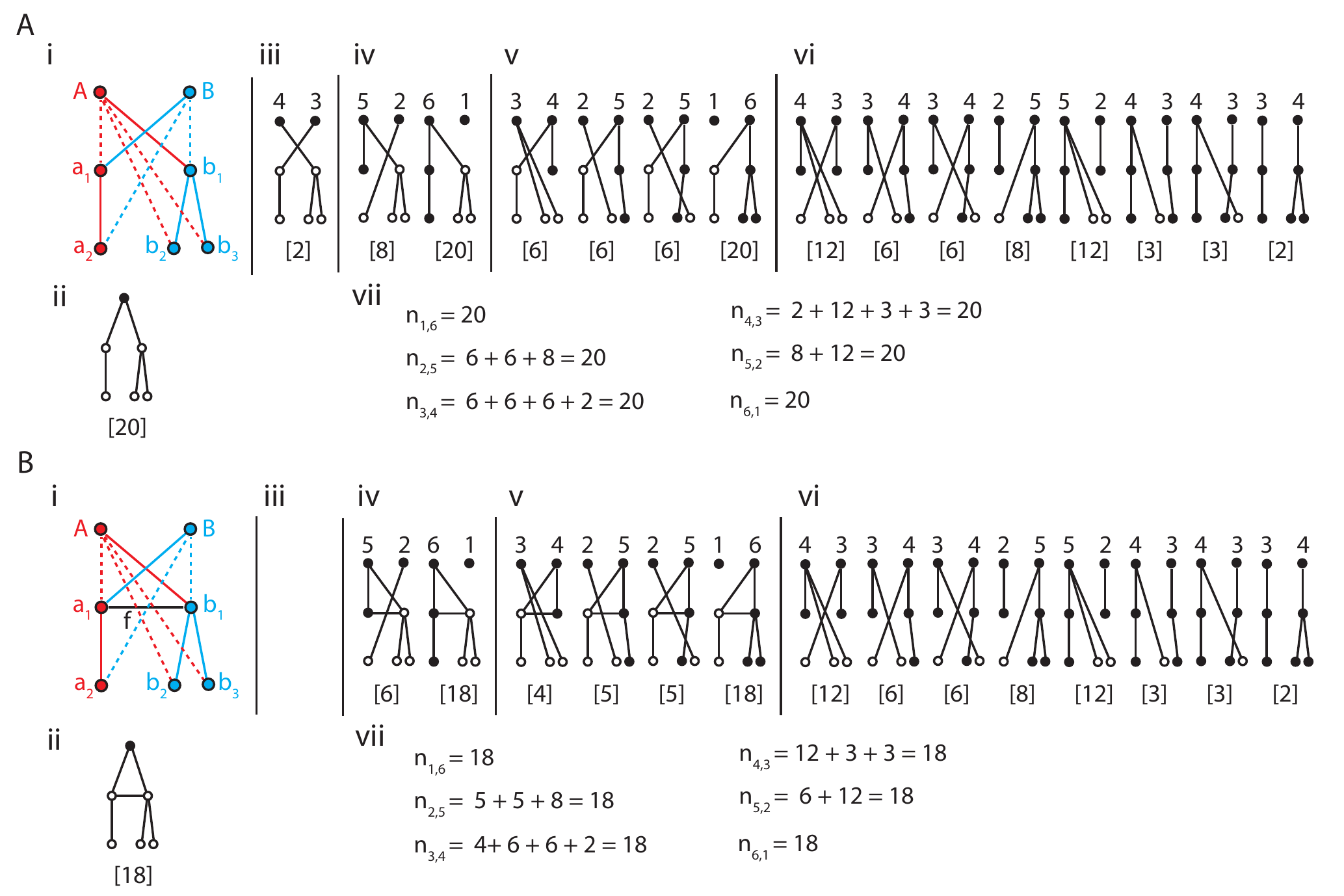}
\caption{Full sets of tree operations. In A) we have a fenceless structure, in B) we have the same structure with a fence $f$. i) The full 2d-trees; blue are type $b$ nodes or edges, red are type $a$ nodes or edges. Solid lines are major edges, dashed lines are minor edges. Black edges are fences. ii) The major graph when nodes $A$ and $B$ are contracted. iii)-vi) Major graphs corresponding to $\beta$-subtrees indicated by blackened nodes. $\beta$-subtrees are partitioned into subgraphs $\tau_A$ connected to node $A$ and $\tau_B$ connected to node $B$. iii) Major graphs when both $\tau_A,\tau_B=\phi$ are empty. iv) Major graphs when $\tau_A=\phi$ and $\tau_B \neq \phi$. v) The trees when $\tau_A \neq \phi$ and $\tau_B=\phi$. vi) Major graphs when $\tau_A,\tau_B \neq \phi$. vii) The total $n_{r,7-r}$ of combinatorial terms of trees with $r$ nodes in component connected to node $A$.}
\label{TD_Tree_Operations}
\end{figure}

Secondly, we generalize the notion of $1$-nodesets.

\begin{definition}
A \emph{$\beta$-subtree} $\tau$ of a $\beta$-tree $T$ is a subset of nodes from $T$ such that:

i) The two root nodes are in $\tau$.

ii) If a node in $\tau$ is the parent of a fence, one of the two daughter nodes bridged by the fence must also be in $\tau$.

iii) If a node is in $\tau$, both parental nodes are also in $\tau$.
\label{Subtree}
\end{definition}

\begin{example} Consider Figure \ref{TD_Tree_Operations}A,B. Here the original $\beta$-trees are in Figures \ref{TD_Tree_Operations}A,Bi. The $\beta$-subtrees are indicated in Figure \ref{TD_Tree_Operations}A,B iii-iv by the solid nodes. Note that the two roots are always in $\tau$. These are the parents of the fence $f$ in Figure \ref{TD_Tree_Operations}B and so in agreement with Definition \ref{Subtree}ii we find that at least one of the two nodes $a_1$ and $b_1$ bridged by $f$ lies in $\tau$.
\end{example}

The $\beta$-subtree $\tau$ of a $\beta$-tree $T$ can be used to define a modified major graph, analogously to the construction of $T_{maj}(E')$ in Lemma \ref{TreeOperations}, as follows:

\begin{definition}
For a $\beta$-tree $T$ and $\beta$-subtree $\tau$, the induced tree $T(\tau)$ is the major graph obtained from $T$ by the following operations.

i) Select all nodes from $T$. 

ii) For any node (that is not a root) in $\tau$ of type $a$ (resp. $b$), select the parental edge of same type $a$ (resp. type $b$).

iii) If any node (that is not a root) of type $a$ (resp. $b$) is immediately adjacent (but not in) $\tau$, select the parental edge of opposite type $b$ (resp. type $a$).

iv) If any node is neither a member of, or immediately adjacent to, $\tau$, select the major parental edge from $T$.

v) If two nodes are connected by a fence in $T$, select the fence if and only if one or both nodes are not in $\tau$.
\label{NewTree}
\end{definition}

Note that this definition differs from Lemma \ref{TreeOperations} in one important way. By Definition \ref{Subtree}, any $\beta$-subtree contains both root nodes. By Definition \ref{NewTree}v, any fence between the two root nodes is not selected in $T(\tau)$. However, we find by Lemma \ref{TreeOperations}ii that $T_{maj}(E')$ will contain a fence between the two roots. We will later see that this difference has an important implication for the calculation of the total number of possible TD-Evolutions.

In order to introduce the main property of $\beta$-trees that will allow us to count TD-Evolutions we need to introduce some notation.
\\
\newline
{\bf Terminology}
\begin{itemize}
    \item For any $2$d-tree $T$ with major graph $T(\tau)$ corresponding to $\beta$-subtree $\tau$:
         \begin{itemize} 
             \item $\overline{T}(\tau)$ is the graph obtained when the two root nodes of $T(\tau)$ are contracted together.
             \item $C(\tau)$ is the product of combinatorial coefficients across nodes and fences of $T(\tau)$ given by Theorem \ref{CountTheorem}.
         \end{itemize}
    \item $\mathcal{S}$ denotes the set of valid $\beta$-subtrees according to Definition \ref{Subtree}.
    \item $\varepsilon$ denotes the trivial $\beta$-subtree containing just the two root nodes.
    \item For any node or fence $x$ in $T$:
         \begin{itemize} 
             \item $N_x(\tau)$ is the number of nodes from $x$ and its descendants in $T$, attached to the $A$ root in $T(\tau)$.
             \item $\tau_x$ denotes the restriction of $\tau$ to $x$ and its descendants.
             \item $\mathcal{S}_x$ denotes the set of possible subsets $\tau_x$.
             \item $C_x(\tau)$ denotes the product of factors of $C(\tau)$ arising from $x$ and its descendants.
             \item $c_x(\tau)$ denotes the single factor associated with node (or fence) $x$.
             \item Terms with an overline added, such as $\overline{c}_x(\tau)$, are the corresponding terms using $\overline{T}(\tau)$ instead of $T(\tau)$.
             \item When terms, such as $c_x(\tau)$ (and $C_x(\tau)$), only depend upon $x$ (and its descendants) in $T(\tau)$, we equivalently use notation $c_x(\tau_x)$ (and $C_x(\tau_x)$).
         \end{itemize} 
   
\end{itemize}

\begin{example} In Figure \ref{TD_Tree_Operations}Bii we have a $\beta$-tree. The graphs in each of Figures \ref{TD_Tree_Operations}Biv-vi are the possible major graphs $T(\tau)$, where each $\beta$-subtree $\tau$ can be identified from the solid nodes. The counts $N_A(\tau)$ can be seen above the $A$ node for each graph. Node $b_1$ in the $\beta$-tree in Figure \ref{TD_Tree_Operations}Bii has two daughter branches with one node each, so we write $c_{b_1}(\varepsilon)={2 \choose 1}=2$. We trivially have $c_{b_2}(\varepsilon)=c_{b_3}(\varepsilon)=1$ for leaf nodes $b_2$ and $b_3$; the descendants of $b_1$, so can also write $C_{b_1}(\varepsilon)=c_{b_1}(\varepsilon)\cdot c_{b_2}(\varepsilon)\cdot c_{b_3}(\varepsilon)=2$.
\end{example}

We are finally in a position to describe the following fundamental result, which will allow us to determined the total number of TD-Evolutions.

\begin{theorem}
Let $T$ be any $\beta$-tree with $N$ nodes, and $\mathcal{S}$ the corresponding set of $\beta$-subtrees. Then for any $r \in \{1,2,...,N-1\}$ we have:

\begin{equation}
\sum\limits_{\{\tau \in \mathcal{S}:N_A(\tau)=r\}}C(\tau)=\overline{C}(\varepsilon)
\label{MainEquation}
\end{equation}

\label{KernelTheorem}
\end{theorem}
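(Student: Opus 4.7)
My plan is to prove the identity by induction on the number of nodes $N$ in the $\beta$-tree $T$. The base cases are immediate: when $T$ consists of only the two roots the index set $\{1,\ldots,N-1\}$ is empty and the statement is vacuous, and the smallest nontrivial cases (with one or two additional nodes, with or without a fence) can be verified by direct enumeration against $\overline{C}(\varepsilon)$.

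For the inductive step I would exploit the decomposition $\tau = \tau_A \sqcup \tau_B$ suggested by Figure \ref{TD_Tree_Operations}, where $\tau_A$ comprises the non-root members of $\tau$ that attach to the $A$-side of $T(\tau)$ and $\tau_B$ those attaching to the $B$-side. By the selection rules in Definition \ref{NewTree}, a type-$a$ node $x$ in $\tau$ joins $\tau_A$ via its type-$a$ parent, while a type-$a$ node adjacent to but not in $\tau$ joins $\tau_B$ via its type-$b$ parent, and symmetrically for type-$b$ nodes, so the $A/B$ partition is forced by $\tau$ itself. I would then peel off a leaf structure of $T$ — either a single leaf node $x$ with no descendants, or a pair of sibling leaves $x_a, x_b$ bridged by a fence — obtaining a smaller $\beta$-tree $T'$, and correspondingly a restricted $\beta$-subtree $\tau'$ of $T'$ for each $\tau$ of $T$.

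The crux is a local identity at the parent $y$ of the peeled leaf. Summing the combinatorial factor $c_y(\tau)$, governed by Lemma \ref{CombinatoricsLemma}, over the permissible inclusion/exclusion states of the leaf — while tracking the corresponding shift of $N_A(\tau)$ by $+1$ or $0$ according to which side the leaf attaches to — should produce exactly the factor needed to convert $\overline{C}(\varepsilon)$ on $T'$ into $\overline{C}(\varepsilon)$ on $T$. Invoking the inductive hypothesis on $T'$ at the appropriate shifted value of $r$ then closes the step, provided the sums of local contributions on the left match the local multinomial factor on the right for every $r$. This reduces to standard multinomial/binomial identities for the pure leaf case.

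The principal obstacle is the fence-leaf case. Definition \ref{Subtree}ii forces at least one endpoint of the fence to lie in $\tau$, eliminating one of the four apparent inclusion patterns, and Lemma \ref{CombinatoricsLemma}ii contributes the asymmetric factor $\binom{y_1+y_2}{y_1}-1$. Reconciling the missing configuration and the $-1$ correction with the cleaner multinomial at the corresponding node of $\overline{T}(\varepsilon)$ will require a delicate accounting — likely absorbing the $-1$ into the forbidden inclusion pattern to restore the uniform multinomial summation, in the same spirit as the subtraction argument used in the proof of Lemma \ref{CombinatoricsLemma}ii. Once this cancellation is in place, the induction should propagate cleanly up from the leaves to the roots.
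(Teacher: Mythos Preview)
Your induction is oriented the wrong way: you peel a leaf from the bottom, whereas the paper decomposes the $\beta$-tree at the roots. This difference is not cosmetic. The quantity $C(\tau)$ is, via Theorem~\ref{CountTheorem}, a product of multinomial coefficients $c_z(\tau)=\binom{m}{m_1,\ldots,m_K}$ where the $m_k$ are the sizes of the branches descending from $z$ in $T(\tau)$. When you delete (or add) a leaf $x$, the branch sizes change not only at the immediate parent $y$ but at \emph{every} ancestor of $x$ in the major graph $T(\tau)$. Consequently the ratio $C(\tau)/C'(\tau')$ between the tree with and without the leaf is a product along the entire path from $x$ to its root, and that path --- hence the ratio --- depends on $\tau$ itself (through the choice of major/minor edges dictated by Definition~\ref{NewTree}). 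Your ``local identity at the parent $y$'' therefore does not exist in the form you need: you cannot factor the leaf's contribution out of $\sum_{\tau}C(\tau)$ and then apply the inductive hypothesis to the remaining sum over $\tau'$, because the correction factor is different for every $\tau'$.

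The paper avoids this by working from the top. Its key observation is that $C(\tau)$ factors as $c_A(\tau)\,c_B(\tau)\prod_{\text{root branches}}C_{\text{branch}}(\tau_{\text{branch}})$, where the root factors $c_A$ and $c_B$ depend only on how many nodes of each branch land on the $A$-side, and the branch factors are mutually independent and are themselves the $C$-functions of strictly smaller $\beta$-trees. The inductive hypothesis then evaluates each branch sum to the branch's own $\overline{C}(\varepsilon)$, uniformly in the partition $(r_a,r_b,r_f)$, and the residual sum over $(r_a,r_b,r_f)$ of the root multinomials collapses by the Vandermonde identity to the single multinomial attached to the contracted root of $\overline{T}(\varepsilon)$. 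The fence case is handled by comparing the fenced subtree with its fenceless twin: the difference is the constant $\overline{C}_{a'}(\varepsilon)\overline{C}_{b'}(\varepsilon)$ in every $r_f$-slice, which is exactly the $-1$ in $\binom{n_{a'}+n_{b'}}{n_{a'}}-1$. None of this structure is visible from a leaf; if you want to salvage a bottom-up argument you would need to replace the local multinomials by a global hook-length expression for $C(\tau)$ and track how all hooks along the leaf-to-root path change simultaneously, which is substantially harder than what you sketched.
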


\begin{example} We see in Figure \ref{TD_Tree_Operations}Bii the major graph $\overline{T}(\varepsilon)$, where the two root nodes have been contracted together. There are two non-trivial combinatorial terms. One from the fence $f$ below the root, which has two nodes descending the left side, three the right, resulting in combinatorial coefficient $\overline{c}_f(\varepsilon)=({5 \choose 2}-1)=9$, by Theorem \ref{CountTheorem}. The other term comes from the daughter node $b_1$ to the right of the root, which has two daughter branches with one node each, resulting in combinatorial term $\overline{c}_{b_1}(\varepsilon)={2 \choose 1}=2$. All other nodes give coefficient $1$. Together we get the factor $\overline{C}(\varepsilon)=\overline{c}_f(\varepsilon)\cdot \overline{c}_{b_1}(\varepsilon)\cdot1=18$, the number in square brackets given below the graph. Now there are two major graphs $T(\tau_1)$, $T(\tau_2)$ for which $N_A(\tau_1)=N_A(\tau_2)=5$; the first graph in Figure \ref{TD_Tree_Operations}Biv, which has combinatorial term $C(\tau_1)=6$, and the fifth graph in Figure \ref{TD_Tree_Operations}Bvi, which has combinatorial term $C(\tau_2)=12$. These add up to the same value $18$ we found for the graph with contracted roots, agreeing with Equation (\ref{MainEquation}) for $r=5$.
\end{example}

\begin{proof} (\emph{of Theorem \ref{KernelTheorem}})
We prove this by induction on the number of nodes in the $\beta$-tree.
\\
\newline
\emph{\bf Induction Initial Case}
\\

Firstly consider $\beta$-trees with $N=2$ nodes in total. There are two root nodes $A$ and $B$ of type $a$ and $b$, respectively. There are only two possible $\beta$-trees depending on whether $A$ and $B$ are linked by a fence. Now any subtree from $\mathcal{S}$ must contain the two roots by Definition \ref{Subtree}i, so there is only the one subtree $\varepsilon=\{A,B\}$ to consider. If there is a fence between $A$ and $B$ then by Definition \ref{NewTree}v, the fence does not belong to $T(\varepsilon)$. Thus for both cases $T(\varepsilon)$ contains both root nodes and no edges. Now the only value that $r \in 1,...,N-1=1$ can take is $1$ and $N_A(\tau)=r=1$; the number of nodes attached to node $A$ is $1$. Now from Theorem \ref{CountTheorem}, the combinatorial term associated with each node $A$ and $B$ is $1$. Furthermore, the contracted tree $\overline{T}(\varepsilon)$ is a single node, which similarly has a combinatorial term of $1$. We then find that:

\begin{center}
$\sum\limits_{\{\tau \in \mathcal{S}:N_A(\tau)=r\}}C(\tau)=1.1=1=\overline{C}(\varepsilon)$
\end{center}

The result is therefore correct for $\beta$-trees with $N=2$ nodes.
\\
\newline
\emph{\bf Inductive Assumption}
\\

Next we make the inductive hypothesis that the theorem is true for all $\beta$-trees with $N \le K-1$ nodes, for some $K > 2$. We now consider a $\beta$-tree $T$ with $N=K$ nodes. We have two root nodes, $A$ and $B$. The daughter nodes from these two roots may be either type $a$ nodes from root node $B$, type $b$ nodes from root node $A$, or fences descending from both nodes, as portrayed in Figure \ref{TD_Proof_Pic}Ai.

Note also that although the original $\beta$-tree $T$ can have type $a$ nodes with major parent $A$, or type $b$ nodes with major parent $B$, they can effectively be assumed to have opposite parentage. More specifically, if we have a daughter node $x_a$ of type $a$ descending from either root node, then when any $\beta$-subtree $\tau$ not including $x_a$ (such as $\varepsilon$) is used, we find $x_a$ is attached to $B$ in $T(\tau)$ by Definition \ref{NewTree}iii. Also, for any $\beta$-subtree $\tau$ including $x_a$ (such as the entire nodeset from $T$), we find $x_a$ is attached to $A$ in $T(\tau)$ by Definition \ref{NewTree}ii. When the two roots are contracted in $\overline{T}(\varepsilon)$ we find node $x_a$ attached to the single root. Thus the choice of which root to use as the major parent of $x_a$ has no affect on the validity of Equation (\ref{MainEquation}) and we take the root $B$ as stated. The argument for daughter node $x_b$ is similar. This is equivalent to assuming $T=T(\varepsilon)$.

Although there may be any number of these type of branches descending from the roots, for the sake a simpler exposition we provide the proof just for the four branches drawn in Figure \ref{TD_Proof_Pic}Ai. The generalization is relatively straightforward (see comment at end of proof). 

Now we require a sum over $\beta$-subtrees $\tau$ such that $N_A(\tau)=r$. That is, we require $r$ nodes in the component of $T(\tau)$ attached to root node $A$. We suppose that in the original tree $T$ (see Figure \ref{TD_Proof_Pic}Ai) there are $n_a$, $n_b$ and $n_f$ nodes contained in each of the two branches containing nodes $a$ and $b$, and the two branches containing the fence $f$, respectively, where $n_f=n_{a'}+n_{b'}$. We suppose that there are subsequently $r_a$, $r_b$ and $r_f=r_{a'}+r_{b'}$ of these nodes attached to $A$ in major tree $T(\tau)$, such as in Figure \ref{TD_Proof_Pic}Aii. The count $r$ includes node $A$ so we require $r_a+r_b+r_f=r-1$.

Now $C(\tau)$ is a product of terms across the nodes and fences, which can be split into $A$, $B$, the nodes in the two branches containing $a$ and $b$, and the two branches bridged by fence $f$. Recalling the terminology introduced above, we can split the combinatorial term for major tree $T(\tau)$ as:

\begin{center}
$C(\tau)=c_A(\tau)\cdot c_B(\tau)\cdot C_a(\tau_a)\cdot C_b(\tau_b)\cdot C_{f}(\{\tau_{a'},\tau_{b'}\})$
\end{center}

Note that $\tau_a$, $\tau_b$, $\tau_{a'}$ and $\tau_{b'}$ are the subsets of $\tau$ the include nodes $a$, $b$, $a'$ and $b'$ and their descendants, respectively. Thus when we have trivial $\beta$-subtree $\tau=\varepsilon$, we find these subsets are empty; $\tau_a=\tau_b=\tau_{a'}=\tau_{b'}=\phi$.

The left hand side of Equation (\ref{MainEquation}) can then be split into sums across the four branches as follows:
\begin{equation}
\sum\limits_{\substack{\{\tau \in \mathcal{S}:\\n_A(\tau)=r\}}}C(\tau) =  \sum\limits_{\substack{\{r_a,r_b,r_f:\\r_{a}+r_{b}+r_f\\=r-1\}}} c_A(\tau)c_B(\tau)\cdot
\sum\limits_{\substack{\{\tau_a \in \mathcal{S}_a:\\N_a(\tau_a)=r_a\}}}C_a(\tau_a)\cdot \sum\limits_{\substack{\{\tau_b \in \mathcal{S}_b:\\N_b(\tau_b)=r_b\}}}C_b(\tau_b)\cdot\\
\sum\limits_{\substack{\{\tau_{a'} \in \mathcal{S}_{a'},\tau_{b'} \in \mathcal{S}_{b'}:\\N_f(\{\tau_{a'},\tau_{b'}\})=r_f\\ \sim (\tau_{a'},\tau_{b'}=\phi)\}}}C_{f}(\{\tau_{a'},\tau_{b'}\})
\label{MessyEquation}
\end{equation}

Now the sum is restricted to $\beta$-subtrees with $r-1$ nodes in the branches descending from $A$. We have three branches from node $A$ with node counts $r_a$, $r_b$ and $r_f=r_{a'}+r_{b'}$, where the two branches containing $a'$ and $b'$ are treated as a single branch in accordance with Theorem \ref{CountTheorem}. Thus we find that we associate node $A$ with the multinomial coefficient:
 
\begin{equation}
c_A(\tau)={r-1 \choose r_{a},r_{b},r_f}
\label{c_A}
\end{equation}

We similarly find we have:

\begin{equation}
c_B(\tau)={K-r-1 \choose n_a-r_a,n_b-r_b,n_f-r_f}
\label{c_B}
\end{equation}

We thus have expressions for two terms in Equation (\ref{MessyEquation}). To calculate the remaining terms we show that each branch corresponds to a smaller $\beta$-tree ($<K-1$ nodes) which will enable us to use the inductive hypothesis. We have three cases to consider.
\\
\newline
\emph{\bf Case I: Dealing with Type $a$ Branches}
\\

Instead of the full $\beta$-tree $T$ (represented in Figure \ref{TD_Proof_Pic}Ai), consider the $\beta$-tree in Figure \ref{TD_Proof_Pic}Bi which we obtain by removing all branches except the branch containing $a$, removing edge $B \rightarrow a$ and contracting nodes $A$ to $a$ together. We call the resulting $\beta$-tree $T'$. The corresponding major graphs for $T$ and $T'$ are represented in Figures \ref{TD_Proof_Pic}A,Bii. 

Now every $\beta$-subtree $\tau'$ of $T'$ can be written as $\tau'=\{\tau_a,B\}$ for some $\tau_a \in \mathcal{S}_a$. This correspondence applies for every $\tau_a \neq \phi$. For this single case, $\tau_a = \phi$ we find the root $a$ for $T'$ is missing from $\{\tau,B\}$, and we do not have a valid $\beta$-subtree of $T'$. We thus treat the two cases of $\tau_a=\phi$ and $\tau_a \neq \phi$ separately. 

{\bf Case Ia} ($\tau_a=\phi$). Now the major branch of $2$d-tree $T$ containing $a$ is unmodified in $T(\tau)$. Thus all $n_a$ nodes in the branch containing the $a$ node are in one component connected to the root $B$ node, and so $N_a(\tau_a)=r_a=0$. Now for any $\tau_a \neq \phi$, node $a$ is attached to the root $A$ in $T(\tau)$ by Definition \ref{NewTree}ii, so $r_a>0$. Thus the only case with $r_a=0$ is $\tau_a=\phi$. For this case we note that $C_a(\tau_a)=\overline{C}_a(\varepsilon)$, and the following equation holds true.

\begin{equation}
\sum\limits_{\substack{\{\tau_a \in \mathcal{S}_a:\\N_a(\tau_a)=r_a\}}}C_a(\tau_a)=\overline{C}_a(\varepsilon)
\label{Ca}
\end{equation}

{\bf Case Ib} ($\tau_a \neq \phi$) We next verify Equation (\ref{Ca}) for values $r_a \neq 0$.

Now for $\tau_a \neq \phi$ we have well defined $\beta$-subtrees $\tau'=\{\tau_a,B\}$. Furthermore, the descendants of root $a$ in the major graph $T'(\tau')$ match the descendants of node $a$ in major graph $T(\tau)$, and we find that the combinatorial term associated to tree $T'(\tau')$ will be precisely $C_a(\tau_a)$. Noting that $T'$ has at least one less node than $T$, we can apply the inductive hypothesis using Equation (\ref{MainEquation}) to $T'$ and hence derive Equation (\ref{Ca}) for the remaining cases where $r_a>0$.
\\
\newline
\emph{\bf Case II: Dealing with Type $b$ Branches}
\\

By a symmetric argument on the branch with node $b$ we obtain an analogous equation of the form:

\begin{equation}
\sum\limits_{\substack{\{\tau_a \in \mathcal{S}_b:\\N_b(\tau_b)=r_b\}}}C_b(\tau_b)=\overline{C}_b(\varepsilon)
\label{Cb}
\end{equation}
\emph{\bf Case III: Dealing with Daughter Fences}
\\

We are interested in the remaining combinatorial term $C_f(\{\tau_{a'},\tau_{b'}\})$ from Equation (\ref{MessyEquation}) that we have yet to examine. This corresponds to the two branches containing nodes $a'$ and $b'$, and the fence $f$ between them. We thus define $\beta$-tree $T'$ as the restriction of $T$ to these two branches (see Figure \ref{TD_Proof_Pic}Ci). If we also remove the fence $f$, we get the $\beta$-tree in Figure \ref{TD_Proof_Pic}Di, which we call $T''$. We use terms, such as $C'$ and $C''$ for example, to refer to combinatorial terms associated to $T'$ and $T''$.

The reason for doing this is because the combinatorial terms of the two sets of induced major graphs are closely related, which we will exploit. For example, in Figure \ref{TD_Tree_Operations}A we see a $\beta$-tree with a fence and in \ref{TD_Tree_Operations}B we see the same $\beta$-tree with the fence removed. The combinatorial terms (in square brackets below each graph in Figures \ref{TD_Tree_Operations}A,Biii-vi) are identical in all cases except when either $\tau_{a'}=\phi$ or $\tau_{b'}=\phi$ is empty.

First consider $T''$. For the graph in Figure \ref{TD_Proof_Pic}Di we have $n_{b'}$ nodes descending from node $A$ and $n_{a'}$ from node $B$. Now because there is no fence $f$ present in $T''$ we have two separate branches; one from root $A$ down the branch containing node $b'$, the other from root $B$ down the branch containing node $a'$. We can then apply the same methods as Cases I and II above to conclude Equation (\ref{MainEquation}) is valid for $T''$ (Figure \ref{TD_Proof_Pic}D). This gives us:

\begin{equation}
\sum\limits_{\substack{\{\tau_{a'} \in \mathcal{S}_{a'} \\ \tau_{b'} \in \mathcal{S}_{b'}\\ N_f(\{\tau_{a'},\tau_{b'}\})=r_f\}}}C''(\tau_{a'},\tau_{b'})  = \overline{C}_{a'}''(\varepsilon)\cdot \overline{C}_{b'}''(\varepsilon)\cdot {n_{a'}+n_{b'} \choose n_{a'}}= \overline{C}_{a'}(\varepsilon)\cdot \overline{C}_{b'}(\varepsilon)\cdot {n_{a'}+n_{b'} \choose n_{a'}}
\label{Equation4Proof}
\end{equation}

Here we have used the fact that the components $\overline{C}_{a'}''(\varepsilon)$ and $\overline{C}_{b'}''(\varepsilon)$ derived from the $\beta$-tree $T''$ (corresponding to the two triangles in Figure \ref{TD_Proof_Pic}Diii) are identical to the components $\overline{C}_{a'}(\varepsilon)$ and $\overline{C}_{b'}(\varepsilon)$ derived from the original $\beta$-tree $T$ (see Figure \ref{TD_Proof_Pic}Aiii). The combinatorial term ${n_{a'}+n_{b'} \choose n_{a'}}$ arises because when the two root nodes are contracted together the single resulting node has two descending branches containing $n_{a'}$ and $n_{b'}$ nodes (see Figure \ref{TD_Proof_Pic}Diii), and we then apply Theorem \ref{CountTheorem}. 

Now we want the corresponding sum to Equation (\ref{Equation4Proof}) for tree $T'$. We have four cases to consider depending on whether $\tau_{a'}$ or $\tau_{b'}$ are empty.
\\
\newline
{\bf Case IIIi} ($\tau_{a'},\tau_{b'}\neq\phi$) Now if both subsets $\tau_{a'}$ and $\tau_{b'}$ are non-empty, we find from Lemma \ref{TreeOperations}v that node $a'$ is attached to root $A$ and node $b'$ is attached to root $B$, and the fence is not part of $T'$. This results in the identical situation to $T''$, where there was no fence $f$ in the first place. We then find that:

\begin{equation}
\sum\limits_{\substack{\{\tau_{a'} \neq \phi \\ \tau_{b'} \neq \phi\\ N_f(\{\tau_{a'},\tau_{b'}\})=r_f\}}}C'(\{\tau_{a'},\tau_{b'}\}) = \sum\limits_{\substack{\{\tau_{a'} \neq \phi \\ \tau_{b'} \neq \phi\\ N_f(\{\tau_{a'},\tau_{b'}\})=r_f\}}}C''(\{\tau_{a'},\tau_{b'}\})
\label{CaseIIIi}
\end{equation}

An example of this can be seen in Figure \ref{TD_Tree_Operations}A,Bvi, where the combinatorial terms (in square brackets) are equal between the two groups.
\\

\begin{figure}[t!]
\centering
\includegraphics[width=140mm]{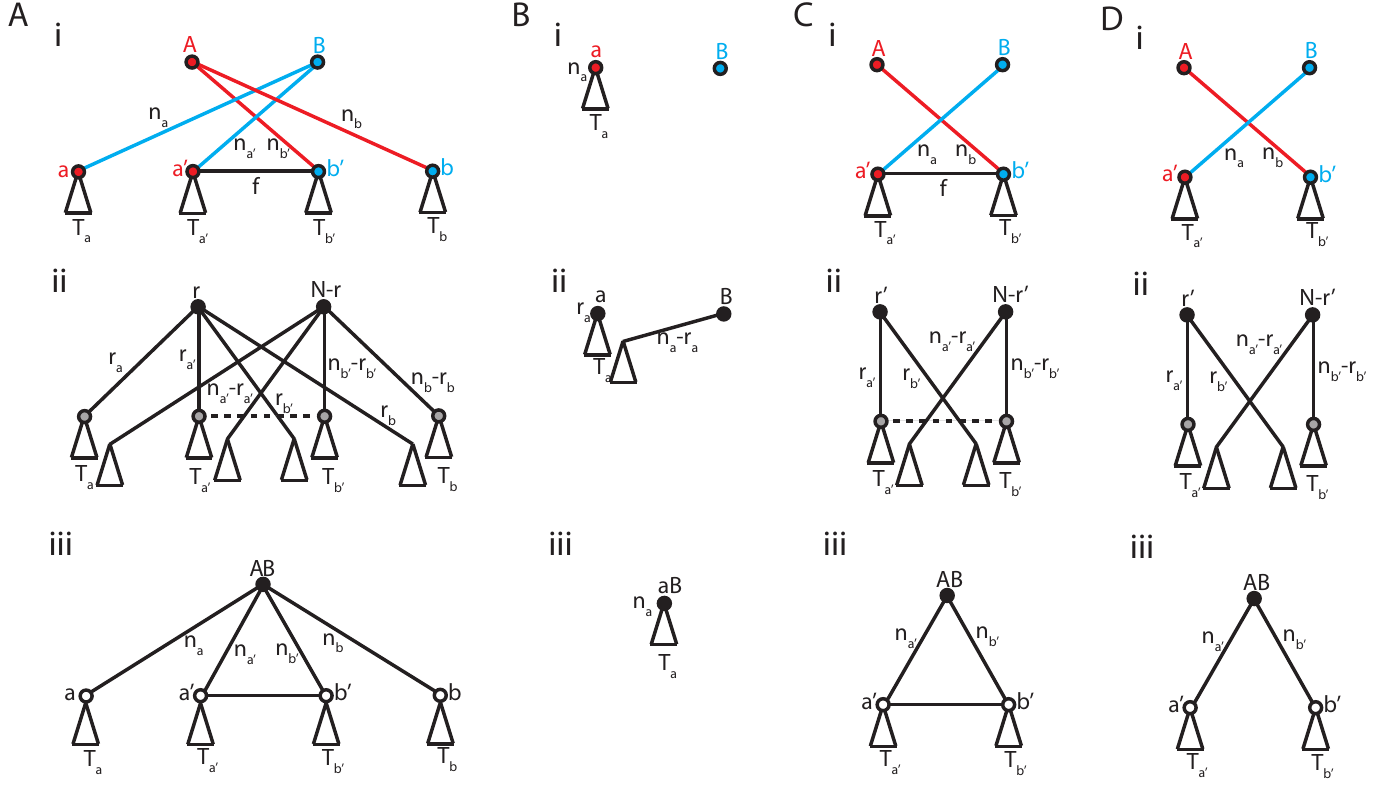}
\caption{A) The general form of a 2d-tree. Triangles indicate a 2d-tree substructure. Dashed lines indicate possible presence of a fence. B) Reduction to a single branch. C) Reduction to a descending fence. D) The graphs of C with the fence removed. i) 2d-trees. ii) Trees $T(\tau)$. iii) Trees $\overline{T(\tau)}$ after root node contraction.}
\label{TD_Proof_Pic}
\end{figure}

Now from Equation (\ref{MessyEquation}) we are interested in the subsets $\tau_{a'}$ and $\tau_{b'}$ such that the number of nodes either bridged by, or descending from, fence $f$ is equal to $r_f=r_{a'}+r_{b'}$ for some value $r_f$. We have just seen that when $\tau_{a'}$ and $\tau_{b'}$ are both non-empty, the two sums in Equation (\ref{Ca}) corresponding to trees $T'$ and $T''$ are equal for all values of $r_f$. For the remaining three cases, where at lease one of $\tau_{a'}$ and $\tau_{b'}$ is empty, we will see that there is a constant difference between the sums arising from trees $T'$ and $T''$. Furthermore, the value $r_f$ will be seen to arise in exactly one of these three cases.
\\
\newline
{\bf Case III ii} ($\tau_{a'},\tau_{b'} = \phi$ and $r_f=n_{b'}$) The $\beta$-subtree $\tau=\varepsilon$ is trivial and there are no changes to the major graph. We then find there are $n_{b'}$ nodes present in the branch descending from $A$ in $T''(\tau)$. We thus find this case applies if $r_f=n_{b'}$. Now this situation does not apply to $T'$ (when the fence $f$ is present). The parental nodes of $f$ (the roots in this case) lie in $\tau=\varepsilon$, so one of the two nodes bridged by $f$ must lie in the $\beta$-subtree $\tau$ by Definition \ref{Subtree}ii. We thus find that although $\varepsilon \in \mathcal{S}''$ in a valid $\beta$-subtree for $T''$, $\varepsilon \not\in \mathcal{S}'$ is not a valid $\beta$-subtree for $T'$ (Figure \ref{TD_Tree_Operations}A,Biii only has a contribution for the fenceless graph, for example). For $T''$, the trivial conditions $\tau_{a'},\tau_{b'} = \phi$ result in a single induced major tree (corresponding to Figure \ref{TD_Proof_Pic}Di), with major edges that match the original $\beta$-tree $T$, and we obtain combinatorial term $C''_f(\varepsilon)=\overline{C}_{a'}(\varepsilon)\cdot \overline{C}_{b'}(\varepsilon)$. We thus find:

\begin{equation}
\sum\limits_{\substack{\{\tau_{a'} \neq \phi \\ \tau_{b'} \neq \phi\\ N_f(\{\tau_{a'},\tau_{b'}\})=r_f\}}}C'(\{\tau_{a'},\tau_{b'}\}) = \sum\limits_{\substack{\{\tau_{a'} \neq \phi \\ \tau_{b'} \neq \phi\\ N_f(\{\tau_{a'},\tau_{b'}\})=r_f\}}}C''(\tau_{a'},\tau_{b'})-\overline{C}_{a'}(\varepsilon)\cdot \overline{C}_{b'}(\varepsilon)
\label{CaseIIIii}
\end{equation}

{\bf Case III iii} ($\tau_{a'} = \phi$, $\tau_{b'} \neq \phi$ and $r_f<n_{b'}$) Now  $\tau_{a'}$ is trivial, so the arm descending from $B$ containing node $a'$ is unchanged from the original major graph for both trees $T'$ and $T''$, and all $n_{a'}$ nodes remain in the component of the major graph containing $B$ ($r_{a'}=0$). After the changes induced by $\tau_{b'}$, the other arm splits with $r_{b'}$ nodes belonging to the component of the major tree containing $A$,  and $n_{b'}-r_{b'}$ nodes belonging to the component containing $B$. Thus in total there are $r_f=r_{b'}$ nodes from the original two branches that end up in the component of the major graph containing $A$, for some $r_{b'} \in \{1,2,...,n_{b'}-1\}$. This case will thus apply provided $r_f<n_{b'}$. Now the combinatorial term from the unmodified $a'$ branch matches those from the original $\beta$-tree; $\overline{C}_{a'}(\varepsilon)$. Now in the tree $T''$ (without the fence $f$) the branch containing node $b'$ can be treated with the inductive hypothesis, like Case II above, and we find that:

\begin{equation}
\sum\limits_{\substack{\{\tau_{a'} = \phi \\ \tau_{b'} \neq \phi\\ N_f(\{\tau_{a'},\tau_{b'}\})=r_f\}}}C''(\tau_{a'},\tau_{b'}) = \overline{C}_{a'}(\varepsilon)\cdot \sum\limits_{\substack{\{\tau_{b'} \neq \phi\\ N_{b'}(\tau_{b'})=r_f\}}}C''_{b'}(\tau_{b'})\cdot{n_{b'}-r_{b'}+n_{a'} \choose n_{a'}}=\overline{C}_{a'}(\varepsilon)\cdot \overline{C}_{b'}(\varepsilon)\cdot {n_{b'}-r_{b'}+n_{a'} \choose n_{a'}}
\label{StopGap}
\end{equation}

Here we pick up a combinatorial factor ${n_{b'}-r_{b'}+n_{a'} \choose n_{a'}}$ from the two branches descending from node $B$. Now for the tree $T'$ (with fence $f$), the only combinatorial factor that differs between any pair of induced major trees $T'(\tau)$ and $T''(\tau)$, is the combinatorial term from node $B$ in $T''(\tau)$, which becomes the fence factor for $f$; ${n_{b'}-r_{b'}+n_{a'} \choose n_{a'}}-1$ in $T'(\tau)$. That is:

\begin{center}
$\frac{C''(\{\tau_{a'},\tau_{b'}\})}{{n_{b'}-r_{b'}+n_{a'} \choose n_{a'}}}=\frac{C'(\{\tau_{a'},\tau_{b'}\})}{{n_{b'}-r_{b'}+n_{a'} \choose n_{a'}}-1}$.
\end{center}

Substituting this into Equation (\ref{StopGap}) gives us:

\begin{equation}
\sum\limits_{\substack{\{\tau_{a'} = \phi \\ \tau_{b'} \neq \phi\\ N_f(\{\tau_{a'},\tau_{b'}\})=r_f\}}}C'(\{\tau_{a'},\tau_{b'}\}) =\overline{C}_{a'}(\varepsilon)\cdot \overline{C}_{b'}(\varepsilon)\cdot \left({n_{b'}-r_{b'}+n_{a'} \choose n_{a'}}-1\right)
\label{StopGap2}
\end{equation}

Then subtracting Equation (\ref{StopGap2}) from Equation (\ref{StopGap}) reveals the same constant difference observed in the previous case:

\begin{equation}
\sum\limits_{\substack{\{\tau_{a'} = \phi \\ \tau_{b'} \neq \phi\\ N_f(\{\tau_{a'},\tau_{b'}\})=r_f\}}}C'(\{\tau_{a'},\tau_{b'}\}) = \sum\limits_{\substack{\{\tau_{a'} = \phi \\ \tau_{b'} \neq \phi\\ N_A=r_f+1\}}}C''(\{\tau_{a'},\tau_{b'}\})-\overline{C}_{a'}(\varepsilon)\cdot \overline{C}_{b'}(\varepsilon)
\label{CaseIIIiii}
\end{equation}

{\bf Case III iv}: ($\tau_{b'} = \phi$, $\tau_{a'} \neq \phi$ and $r_f>n_{b'}$) The argument is analogous to Case III ii and the same difference is obtained where we find:

\begin{equation}
\sum\limits_{\substack{\{\tau_{a'} \neq \phi \\ \tau_{b'} = \phi\\ N_f(\{\tau_{a'},\tau_{b'}\})=r_f\}}}C'(\{\tau_{a'},\tau_{b'}\}) = \sum\limits_{\substack{\{\tau_{a'} \neq \phi \\ \tau_{b'} = \phi\\ N_A=r_f+1\}}}C''(\{\tau_{a'},\tau_{b'}\})-\overline{C}_{a'}(\varepsilon)\cdot \overline{C}_{b'}(\varepsilon)
\label{CaseIIIiv}
\end{equation}

Thus in all three cases (III i-iii) the difference between the tree with and without the fence is $\overline{C}_{a'}(\varepsilon)\cdot \overline{C}_{b'}(\varepsilon)$. Furthermore, for any single value of $r_f$, one of these three cases is applicable. We thus find, using Equations (\ref{CaseIIIii}), (\ref{CaseIIIiii}) and (\ref{CaseIIIiv}) with (\ref{CaseIIIi}) that:

\begin{center}
$\sum\limits_{\substack{\{\tau_{a'} \in \mathcal{S}_{a'} \\ \tau_{b'} \in \mathcal{S}_{b'}\\ N_f(\{\tau_{a'},\tau_{b'}\})=r_f\}}}C'(\{\tau_{a'},\tau_{b'}\}) = \sum\limits_{\substack{\{\tau_{a'} \in \mathcal{S}_{a'} \\ \tau_{b'} \in \mathcal{S}_{b'}\\ N_f(\{\tau_{a'},\tau_{b'}\})=r_f\}}}C''(\{\tau_{a'},\tau_{b'}\})-\overline{C}_{a'}(\varepsilon)\cdot \overline{C}_{b'}(\varepsilon)$
\end{center}

Then substituting this into Equation (\ref{Equation4Proof}) gives us:

\begin{equation}
\begin{array}{l l}
\sum\limits_{\substack{\{\tau_{a'} \in \mathcal{S}_{a'} \\ \tau_{b'} \in \mathcal{S}_{b'}\\ N_f(\{\tau_{a'},\tau_{b'}\})=r_f\}}}C'(\{\tau_{a'},\tau_{b'}\}) & = \overline{C}_{a'}(\varepsilon)\cdot \overline{C}_{b'}(\varepsilon)\cdot {n_{a'}+n_{b'} \choose n_{a'}}-\overline{C}_{a'}(\varepsilon)\cdot \overline{C}_{b'}(\varepsilon)\\
& = \overline{C}_{a'}(\varepsilon)\cdot \overline{C}_{b'}(\varepsilon)\cdot ({n_{a'}+n_{b'} \choose n_{a'}}-1)
\end{array}
\label{FenceComponent}
\end{equation}

Now $C'(\{\tau_{a'},\tau_{b'}\})$ matches the combinatorial term from the fence and its descendants, $C_f(\{\tau_{a'},\tau_{b'}\})$. Furthermore $\overline{C}_{a'}(\varepsilon)$, $\overline{C}_{b'}(\varepsilon)$ and $({n_{a'}+n_{b'} \choose n_{a'}}-1)$ match the terms in the graph $\overline{T}$ obtained from the branch containing node $a'$, the branch containing node $b'$, and fence $f$ (by Theorem \ref{CountTheorem}), and so equal $\overline{C}'(\varepsilon)$. Thus we find that:

\begin{equation}
\sum\limits_{\substack{\{\tau_{a'} \in \mathcal{S}_a \\ \tau_{b'} \in \mathcal{S}_{b'}\\ N_f(\{\tau_{a'},\tau_{b'}\})=r_f\}}}C_f(\{\tau_{a'},\tau_{b'}\}) = \overline{C}_f(\varepsilon)
\label{Fence}
\end{equation}
\\
\newline
\emph{\bf Completing the Induction}
\\

Thus finally substituting Equations (\ref{c_A}), (\ref{c_B}), (\ref{Ca}), (\ref{Cb}) and (\ref{Fence}) into Equation (\ref{MessyEquation}) we find that we have:

\begin{center}
$\sum\limits_{\{\tau \in \mathcal{S}:N_A(\tau)=r\}}C(\tau)= \overline{C}_a(\varepsilon)\cdot\overline{C}_b(\varepsilon)\cdot\overline{C}_f(\varepsilon) \cdot\sum\limits_{\substack{\{r_a,r_b,r_f:\\r_{a}+r_{b}+r_f\\=r-1\}}}{r-1 \choose r_a,r_b,r_f}{K-r-1 \choose n_a-r_a,n_b-r_b,n_f-r_f}$
\end{center}

Then applying the multinomial version of the Vandermonde identity \cite{Zeng} results in:

\begin{center}
$\sum\limits_{\{\tau \in \mathcal{S}:N_A(\tau)=r\}}C(\tau)= \overline{C}_a(\varepsilon)\cdot\overline{C}_b(\varepsilon)\cdot\overline{C}_f(\varepsilon) \cdot{K-2 \choose n_a,n_b,n_f}$
\end{center}

However, the multinomial coefficient is identical to the combinatorial term we get if nodes $A$ and $B$ are contracted to a single root. In Figure \ref{TD_Proof_Pic}Aiii we see two branches containing nodes $n_a$ and $n_b$, these have combinatorial terms equal to $\overline{C}_a(\varepsilon)$ and $\overline{C}_b(\varepsilon)$. We also have fence $f$ bridging the two branches containing nodes $n_{a'}$ and $n_{b'}$. Application of Theorem \ref{CountTheorem} to $f$ for the root contracted graph $\overline{T}(\varepsilon)$ (given in Figure \ref{TD_Proof_Pic}Aiii) returns precisely the term ${K-2 \choose n_a,n_b,n_f}$. We thus find that we have all the coefficients of $C(\overline{T})$ and Equation (\ref{MainEquation}) is obtained.

If we have more than one branch descending from the root nodes, the only change to the argument above is that we sum over a greater number of $r_i$ values. The Vandermonde identity still applies and the same result is obtained.
\end{proof}

%%%%%%%%%%%%%%%%%%%%%%%%%%%%%%%%%%%%%

\subsection{Proving the Main Result} 

Finally, we can use this inductive relationship to determine the number of different evolutions that arise from a TD process, and prove our main result.

\begin{proof} (Proof of Theorem \ref{EvoCounter})
Let $E \in \mathcal{W}_{n-1}$ be a word evolution on $n-1$ TDs with 2d-tree $T$, and $\mathcal{E}(E) \subset \mathcal{W}_{n}$ the corresponding subset of induced evolutions. Let $\mathcal{N}(E)$ and $\mathcal{N}(\mathcal{E}(E))$ denote the number of TD-Evolutions corresponding to word evolution $E$, and set of induced word evolutions $\mathcal{E}(E)$, respectively. Now for every induced evolution $E'$ we know that there corresponds a $1$-nodeset $\tau$ such that the major graph $T_{maj}(E')$ corresponding to $E'$ is obtained from Lemma \ref{TreeOperations}. By Definition \ref{Subtree}, $\tau$ is also a $\beta$-subtree, and we have an induced major graph $T(\tau)$. We also know from Theorem \ref{KernelTheorem} that for any induced major graph $T(\tau)$ we can sum $C(\tau)$ over the $\beta$-subtrees $\tau$ to obtain $\overline{C}(\varepsilon)$. 

Now the difference between $T(\tau)$ and the major graph $T_{maj}(E)$ is that the latter has a fence between the two root nodes, this difference arising from Lemma \ref{TreeOperations}vi.

Now $E'$ is a word evolution on $n$ TDs so $T_{maj}(\tau)$ has $2n$ nodes. Furthermore, $T_{maj}(E')$ can have $r$ nodes at the type $a$ root, for some $r=\{1,2,...,2n-1\}$, along with $2n-r$ nodes at the type $b$ root. Then given the extra fence between the roots, by Theorem \ref{CountTheorem}, the number of TD-Evolutions associated with $T_{maj}(E')$ is given by $({2n \choose r}-1)C(\tau)$. Then, using Theorem \ref{KernelTheorem}, the total number of TD-Evolutions induced by $E$ is given by:

\begin{center}
$\begin{array}{c c l}
\mathcal{N}(\mathcal{E}(E))=\sum\limits_{r=1}^{2n-1}\sum\limits_{\{\tau \in \mathcal{S}:N_A(\tau)=r\}}({2n \choose r}-1)C(\tau) & = & \sum\limits_{r=1}^{2n-1}({2n \choose r}-1)\cdot\sum\limits_{\{\tau \in \mathcal{S}:N_A(\tau)=r\}}C(\tau)\\
& = & \sum\limits_{r=1}^{2n-1}({2n \choose r}-1)\cdot \overline{C}(\varepsilon)\\
& = & \overline{C}(\varepsilon)\cdot ((\sum\limits_{r=0}^{2n}{2n \choose r}-2)-(2n-1))\\
& = & \overline{C}(\varepsilon).(2^{2n}-(2n+1))
\end{array}$
\end{center}

Now by Theorem \ref{CountTheorem} $\overline{C}(\varepsilon)$ is the number of TD-Evolutions associated with word evolution $E$. Thus we have:

\begin{equation}
\mathcal{N}(\mathcal{E}(E))=\mathcal{N}(E)\cdot(4^{n}-(2n+1))
\label{MainIteration}
\end{equation}

Furthermore, by Corollary \ref{DisjointUnion}, the set of induced evolutions from $\mathcal{W}_{n-1}$ gives rise to a disjoint union of $\mathcal{W}_{n}$. Thus summing Equation (\ref{MainIteration}) across all $E \in \mathcal{W}_{n-1}$ gives $\mathcal{N}_n=\mathcal{N}_{n-1}\cdot(4^{n}-(2n+1))$. Starting a recursion from the single TD-Evolution with $\mathcal{N}_1=1$ then proves the theorem.
\end{proof}

%%%%%%%%%%%%%%%%%%%%%%%%%%%%%%%%%%%%%%%%%%%%%%%%%%%%%%%%%%%%%%%%%%%%%%%%%%

\section{Conclusions}

We have seen in Table \ref{TableCount} from our main result that the number of different evolutions increases with uncompromising velocity. This means that beyond five or six tandem duplications it is at present unrealistic to attempt to computationally explore this space in its entirety. This makes it difficult to compare any observations to the set of possibilities. This is further compounded by the non-uniqueness of copy number vectors, which grow at a far slower rate than the number of evolutions. This means that even if the precise copy number vector is known, it will correspond to a multitude of evolutions, all of which explain the data equally well. One could attempt to apply the type of analyses of \cite{Kinsella} to TDs; it is an open problem to determine the number of copy number vectors, or even an efficient algorithm to determine whether a copy number vector can arise from a process of tandem duplication under the assumption of unique breakpoint use. We see from Table \ref{TableCount} that including the somatic connectivity information of TD-Graphs improves the situation and more evolutions can be distinguished, however, we still have a degeneracy and the underlying evolution cannot necessarily be identified. The development of suitable combinatoric approaches to count the number of possible TD-Graphs (rather than the implemented brute force approach of a computer) also remains an open problem.

The methods utilized in this work largely parallel those used to examine breakage fusion cycles \cite{Greenman1}. These are a distinct form of rearrangement which suggest there may be a more general space in which rearrangements operate and these methods apply. Given that tandem duplication and breakage fusion cycles are leading candidate rearrangements in the formation of large scale copy number increases such as those found in amplicons in cancer, a generalization of these methods to the combined space of these rearrangement processes may help to better understand their evolution.

In this study we have treated the process in a strictly discrete manner. However, one could consider TD as a continuous process on the real line (or stretch of DNA) and investigate the relative likelihoods of different structures arising, as has been done with breakage fusion bridge cycles in \cite{Greenman1}.

The methods above and in \cite{Greenman1} can be viewed as mathematical operations on the real line. It would seem plausible that other duplication mechanisms beyond those found in biological rearrangements would yield to similar analyses, which may shed light on the applicability of these methods which link combinatorics, general automaton on symbolic algebra and duplicating mappings on intervals.

%%%%%%%%%%%%%%%%%%%%%%%%%%%%%%%%%%%%%%%%%%%%%%%%%%%%%%%%%%%%%%%%%%%%%%%%%%
\footnotesize{

}
%%%%%%%%%%%%%%%%%%%%%%%%%%%%%%%%%%%%%%%%%%%%%%%%%%%%%%%%%%%%%%%%%%%%%%%%%%


\begin{thebibliography}{99}
\bibitem{Ohno} Ohno S., 1970, \emph{Evolution by gene duplication}, Springer-Verlag.
\bibitem{Nye} Nye TMW, 2009, Modelling the evolution of multi-gene families, \emph{Stat Methods Med Res}, {\bf 18(5)}, 487-504.
\bibitem{McBride} McBride DJ, Etemadmoghadam D, Cooke SL, Alsop K, George J, Butler A, Cho J, Galappaththige D, Greenman CD, Howarth KD, Lau KW, Ng CK, Raine K, Teague J, Wedge DC, Australian Ovarian Cancer Study Group, Caubit X, Stratton MR, Brenton JD, Campbell PJ, Futreal PA, Bowtell DDL, (2012), Tandem duplication of chromosomal segments is common in ovarian and breast cancer genomes, \emph{J. Pathology}, {\bf 227(4)}, 446 - 455.
\bibitem{Raphael1} Raphael BJ, Pevzner PA. 2004. Reconstructing tumor amplisomes.
\emph{Bioinformatics} {\bf 20}: i265–i273.
\bibitem{Raphael2} Raphael BJ, Volik S, Collins C, Pevzner PA. 2003. Reconstructing tumor genome architectures. \emph{Bioinformatics} {\bf 19}: ii162–ii171.
\bibitem{Zhang} Zhang C, Leibowitz ML and Pellman D, 2013, Chromothripsis and beyond: rapid genome evolution from complex chromosomal rearrangements, \emph{Genes Dev.}, {\bf 27}, 2513-2530.
\bibitem{Gascuel} Gascuel O, Hendy MD, Jean-Marie A, McLachlan R, 2003, The combinatorics of tandem duplication trees, \emph{Syst Biol.}, {\bf 52(1)}, 110-8.
\bibitem{Yang} Yang J and Zhang L, On Counting Tandem Duplication Trees, 2004, \emph{Mol. Biol. Evol.}, {\bf 21(6)}, 1160-1163.
\bibitem{Bertrand} Bertrand D, Lajoie M, and El-Mabrouk N, 2008, Inferring Ancestral Gene Orders for a Family of Tandemly Arrayed Genes, \emph{J. of Comp. Biology}, {\bf 15(8)}, 1063-1077.
\bibitem{Greenman2} Greenman CD, Pleasance ED, Newman S, Yang F, Fu B, Nik-Zainal S, Jones D, Lau KW, Carter N, Edwards PA, Futreal PA, Stratton MR, Campbell PJ (2011) Estimation of rearrangement phylogeny for cancer genomes, \emph{Genome Research}, {\bf 22(2)}:346-61.
\bibitem{Bentley} Bentley JL, 1975,  Multidimensional binary search trees used for associative searching, \emph{Comm ACM}, {\bf 18}, 509-517.
\bibitem{Turtle} Turtle H and Croft WB, 1991, Evaluation of an Inference Network-Based
Retrieval Model, \emph{ACM Transactions on Information Systems}, {\bf 9(3)}, 187-222.
\bibitem{Griffiths} Griffiths RC and Marjoram P, 1996, Ancestral inference from samples of DNA sequences with recombination, \emph{J. Comp. Biol.}, {\bf 3(4)}, 479-502.
\bibitem{Kirkpatrick} Kirkpatrick B, Reshefy Y, Finucanez H, Jiangx H, Zhu B and Karp RM, 2010, Comparing Pedigree Graphs, \emph{arXiv:1009.0909v2}.
\bibitem{Automata} Allouche J. and Shallit J. 2003, \emph{Automatic Sequences, Theory, Applications, Generalizations}, CUP.
\bibitem{Posets} Neggers J and Kim H S, 1998, \emph{Basic Posets}, World Scientfic.
\bibitem{Karzanov} Karzanov A and Khachiyan L, 1991, On the Conductance of Order Markov Chains, \emph{Order}, {\bf 8}, 7-15.
\bibitem{Brightwell} Brightwell G and Winkler P, 1991, Counting linear extensions, \emph{Order}, {\bf 8(3)}, 225-242.
\bibitem{SempleSteel} Semple C and Steel M, 2009, \emph{Phylogenetics}, OUP.
\bibitem{Zeng} Zeng J, 1996, Multinomial convolution polynomials, \emph{Discrete Math.}, {\bf 160} (1–3), 219–228.
\bibitem{Kinsella} Kinsella M and Bafna V, 2012, Modelling the Breakage-Fusion-Bridge Machanism: Combinatorics and cancer Genomics, \emph{RECOMB 2012, LNBI 7262}, 148-162.
\bibitem{Greenman1} CD Greenman, SL Cooke, J Marshall, MR Stratton, PJ Campbell, 2013, Modelling Breakage-Fusion-Bridge Cycles as a Stochastic Paper Folding Process, arXiv:1211.2356.
\end{thebibliography}
\end{document}